\newtheorem{thm}{Theorem}[section]
\newtheorem{cor}[thm]{Corollary}
\newtheorem{lemma}[thm]{Lemma}
\newtheorem{prop}[thm]{Proposition}
\journal{}
\begin{document}

\begin{frontmatter}



\title{$p$-central action on groups}

 \author[label1]{Yassine Guerboussa}
 \address[label1]{University Kasdi Merbah Ouargla, Ouargla, Algeria \\ {\tt Email: yassine\_guer@hotmail.fr}}


\address{}

\begin{abstract}
Let $G$ be a finite $p$-group acted on faithfully by a group $A$.  We prove that if $A$ fixes every element of order dividing $p$ ($4$ if $p=2$) in a specified subgroup of $G$, then both $A$ and $[G,A]$ behave regularly, that is the elements of order dividing any power $p^i$ in each one of them form a subgroup; moreover $A$ and $[G,A]$ have the same exponent, and they are nilpotent of class bounded in terms of $p$ and the exponent of $A$.  This leads in particular to a solution of a problem posed by Y. Berkovich.  In another direction we discuss some aspects of the influence of  a $p$-group $P$ on the structure of a finite group which contains $P$ as a Sylow subgroup, under assumptions like  every element of order $p$ ($4$ if $p=2$) in a given term of the lower central series of $P$ lies in the center of $P$.

\end{abstract}

\begin{keyword}
{   automorphisms \sep finite $p$-groups   }


\end{keyword}

\end{frontmatter}


\section{Introduction}
Let $G$ be a finite group acted on by a group $A$.  It is convenient to say that $A$ acts $p$-centrally on $G$ if $A$ fixes every element of order dividing $p$ (4 if $p=2$) in $G$.  

For a positive integer $k$, the left normed commutator $[x_1,x_2,\dots,x_k]$ in $k$ elements of an ambient group,  can be defined by induction, $[x_1]=x_1$ and $[x_1,x_2,\dots,x_k]=[[x_1,x_2,\dots,x_{k-1}],x_k]$.  

We define $\gamma_k(G,A)$ to be the subgroup of $G$ generated by all the left normed commutators $[x_1,x_2,\dots,x_n]$, $n \geq k$, where the $x_i$'s lie in $G \cup A$ in such a way that $ x_1 \in G$, and at least $k-1$ of them lie in $A$.  Note that if one takes the natural action of $G$ on itself, $\gamma_k(G,G)$ coincides with $\gamma_k(G)$ the $k$th term of the lower central series of $G$.  Moreover we have $\gamma_k(G,A)$ is an $A$-invariant normal subgroup of $G$; this fact will be used freely below.\\ 

In \cite{Isa}, M. Isaacs proved that if $A$ is cyclic and  acts $p$-centrally on $[G,A]$, for all the primes $p$ dividing $|G|$, then there is a severe restriction on the structure of $[G,A]$ in terms of $n$ the order of $A$; for instance $[G,A]$ is nilpotent of class bounded by $n$, and has exponent dividing $n$.  The first purpose of this paper is to show in one hand that an analogue of Isaacs' result holds under the weaker condition that $A$ is a group of automorphisms of $G$ acting $p$-centrally on $ \gamma_p(G,A)$, and on the other hand to show that such a severe restriction applies also on $A$.

\begin{thm}\label{main1}
Let $G$ be a finite $p$-group and $A$ be a group of automorphisms of $G$,  such that  $A$ acts $p$-centrally on $ \gamma_p(G,A)$.  Then for all positive integer $i$,
\begin{description}
\item[(i)]the elements of $[G,A]$ of order dividing $p^i$ form a subgroup;
\item[(ii)]the elements of $A$ of order dividing $p^i$ form a subgroup;
\item[(iii)]$\operatorname{exp}([G,A]) =\operatorname{exp}(A)$;
\item[(iv)]the nilpotency class of both $A$ and $[G,A]$ does not exceed $n+p-2$, where $p^n=\operatorname{exp}(A)$.
\end{description}

\end{thm}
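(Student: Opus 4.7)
The strategy is to form the semidirect product $H = G \rtimes A$, reduce to $H$ being a finite $p$-group, and then extract the four assertions from a $p$-centrality property of the normal subgroup $L := [G, A] \cdot A$ of $H$.

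The first step is to show that $A$ itself is a $p$-group. For any $p'$-element $\alpha \in A$, coprime action of $\langle \alpha \rangle$ on $G$ gives $[G, \alpha] = [G, \alpha, \alpha]$; iterating yields
\[
[G, \alpha] \;\leq\; [G, \underbrace{\alpha, \ldots, \alpha}_{p - 1}] \;\leq\; \gamma_p(G, A).
\]
Because $\alpha$ centralizes $\Omega_1(\gamma_p(G, A))$ (respectively $\Omega_2(\gamma_p(G, A))$ when $p = 2$) and has $p'$-order, the standard coprime lemma forces $\alpha$ to act trivially on all of $\gamma_p(G, A)$, hence in particular on $[G, \alpha]$. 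Combined with $[G, \alpha] = [G, \alpha, \alpha]$, this yields $[G, \alpha] = 1$, and faithfulness gives $\alpha = 1$. Thus $A$, and so $H$, is a finite $p$-group.

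The crux is to prove that $L$ is $p$-central, i.e.\ $\Omega_1(L) \leq Z(L)$ for $p$ odd (respectively $\Omega_2(L) \leq Z(L)$ for $p = 2$). Once this is in place, the classical structure theory of $p$-central $p$-groups implies that $L$ is regular in Hall's sense: for every $i$, the elements of $L$ of order dividing $p^i$ form a subgroup $\Omega_i(L)$ of exponent exactly $p^i$. Intersecting with $[G, A]$ and projecting onto $A \cong L / [G, A]$ yield (i) and (ii), while (iii) follows from $L = [G, A] \cdot A$ together with the fact that in a regular $p$-group the exponents of a normal subgroup and its complementary factor both equal the exponent of the join. Finally, a Hall--Petrescu collection argument shows that a $p$-central $p$-group of exponent $p^n$ has nilpotency class at most $n + p - 2$; applied to $L$, this bound transfers to its subgroup $[G, A]$ and its quotient $A$, giving (iv).

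The main obstacle is the proof of the $p$-centrality of $L$. The hypothesis controls $A$ only on $\Omega_1(\gamma_p(G, A))$, a subgroup lying deep inside the lower central series of $L$, whereas the conclusion concerns $\Omega_1(L)$ itself (which also has $A$-components outside $G$). I would propagate the condition outward by induction on $\exp(A)$: the base case $\exp(A) = p$ is essentially the content of Isaacs' theorem extended via $\gamma_p$, and the inductive step uses the Hall--Petrescu formula to rewrite $p$-th powers of elements of $L$ in terms of iterated commutators that land in $\gamma_p(G, A)$, together with residual terms handled by applying the inductive hypothesis to the induced action of $A/\Omega_1(A)$ on an appropriate quotient of $G$.
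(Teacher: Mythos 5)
Your opening step (that $A$ must be a $p$-group, via $[G,\alpha]=[G,\alpha,\alpha]$ for a $p'$-element $\alpha$ and the fact that a $p'$-automorphism trivial on $\Omega(P)$ is trivial) is correct and is exactly the paper's Proposition 2.8. The plan collapses, however, at the step you yourself identify as the crux: under the hypotheses of the theorem the subgroup $L=[G,A]\,A$ of $G\rtimes A$ need \emph{not} be $p$-central. Take $p$ odd, $G$ elementary abelian of rank $p$, and $A=\langle\sigma\rangle$ with $\sigma=1+N$ for $N$ a single nilpotent Jordan block. Since $G$ is abelian, $\gamma_p(G,A)=[G,{}_{p-1}\sigma]=\operatorname{im}(N^{p-1})$, which $\sigma$ centralizes, so the hypothesis holds. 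Here $\sigma^p=1$, $[G,A]=\operatorname{im}(N)$ has rank $p-1$, and a short computation with $\sum_{j=0}^{p-1}\sigma^{jk}\equiv k^{p-1}N^{p-1}$ shows $(v\sigma^k)^p=1$ for every $v\in[G,A]$; thus $L$ is a non-abelian group of exponent $p$ (for $p\geq 3$), whence $\Omega_1(L)=L\not\leq\operatorname{Z}(L)$. The theorem's conclusions concern $[G,A]$ and $A$ \emph{separately} and do not assemble into $p$-centrality of their join. There is a second independent error: your route to (iii) invokes the ``fact'' that in a regular $p$-group a normal subgroup and the corresponding quotient each have the exponent of the whole group, which already fails for $C_{p^2}$ with its subgroup $C_p$. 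Part (iii) is genuinely delicate: the paper's own near-miss example (hypothesis shifted from $\gamma_p$ to $\gamma_{p+1}$) has $\exp([G,A])=p$ but $\exp(A)=p^2$, so no soft structural statement about $L$ can yield it.

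For contrast, the paper never works with $L$. For $H=[G,A]$ it proves only the much weaker fact $\Omega(\gamma_{p-1}(H))\leq\Omega(\gamma_p(G,A))\leq\operatorname{Z}(H)$ (a Three Subgroups Lemma computation), which is precisely the hypothesis of Xu's regularity theorem and gives (i). For $A$ it proves the key correspondence $\sigma^{p^i}=1\Leftrightarrow[G,\sigma]\leq\Omega_i(H)$, by induction on $i$ with base case the cyclic situation, where the semidirect product $[G,\sigma]\langle\sigma\rangle$ is shown to have class less than $p$ and to be $p$-abelian; then $\Omega_{\{i\}}(A)=\operatorname{C}_A(G/\Omega_i(H))$ is automatically a subgroup, giving (ii), and (iii) and (iv) fall out of this correspondence together with the central series built from the $\Omega_i(\gamma_{p-1}(H))$ and $\Omega_i(\gamma_p(G,A))$. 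If you want to salvage your outline, you would need to replace ``$L$ is $p$-central'' by these two separate, and much weaker, statements.
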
   
This result is the best possible as shows the following example :

Let $E$ be an elementary abelian $p$-group of rank $p+1$, and let $A$ be the automorphism group of $E$ generated by the matrix 
$$ 
       \sigma  =  \left( 
       \begin{array}{cccc} 
       1   &  1   &     &             \\ 
           &  1       & \ddots    &     \\ 
           &      & \ddots     &  1   \\ 
          &      &     &  1   \\ 
       \end{array} 
       \right) 
       $$ 
Clearly, $\gamma_p(E,A)=[E,_pA]$, and $A$ acts $p$-centrally on it.  We have $\operatorname{exp}[E,A]=p$,  however it is easy to see that for any positive integer $n$, we have (with the convention $\binom{n}{i}=0$ for $i \geq n+1$)  
$$ 
       \sigma^n  =  \left( 
       \begin{array}{ccccc} 
       1   &  \binom{n}{1}   &\binom{n}{2}     & \cdots &   \binom{n}{p}   \\ 
           &  1       & \binom{n}{1}    & \ddots & \vdots     \\ 
           &      & \ddots     &  \ddots &\binom{n}{2}    \\ 
           &      &     &  1   & \binom{n}{1}\\ 
           &       &     &      &  1     \\
       \end{array} 
       \right) 
       $$
       
Therefore $\sigma^p \neq 1$, that is $\operatorname{exp}(A) \geq p^2$.\\

The following is an immediate consequence of Theorem \ref{main1}.  
 \begin{cor}\label{cor}
 Let $G$ be a finite $p$-group which acts $p$-centrally on $\gamma_p(G)$.  Then $G'$ and $G/\operatorname{Z}(G)$ have the same exponent.  
 \end{cor}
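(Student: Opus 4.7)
The plan is to recognize the corollary as the specialization of Theorem \ref{main1} to the conjugation action of $G$ on itself. This action has kernel $Z(G)$, so it factors through $A := G/Z(G) \cong \mathrm{Inn}(G)$, which acts faithfully on the finite $p$-group $G$.

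The first task is to translate the data into the language of Theorem \ref{main1}. For $a = \sigma_y \in \mathrm{Inn}(G)$ and $x \in G$, the mixed commutator $[x,a] = x^{-1} a(x) = x^{-1} y^{-1} x y$ coincides with the ordinary group commutator $[x,y]$ in $G$. Expanding left normed commutators by induction, every generator of $\gamma_k(G,A)$ is thus an ordinary left normed commutator of elements of $G$, and conversely every such commutator in $G$ arises in this way. Hence $[G,A] = G'$ and $\gamma_p(G,A) = \gamma_p(G)$. Moreover an element of $\gamma_p(G)$ is fixed by all of $\mathrm{Inn}(G)$ precisely when it lies in $Z(G)$, which is in turn exactly the condition that it be fixed by the conjugation action of $G$. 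So the hypothesis that $G$ act $p$-centrally on $\gamma_p(G)$ matches exactly the hypothesis of Theorem \ref{main1} applied to $A$ acting on $G$.

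With Theorem \ref{main1} now applicable, part (iii) yields
\[
\exp(G') \;=\; \exp([G,A]) \;=\; \exp(A) \;=\; \exp(G/Z(G)),
\]
which is the desired equality. There is no genuine obstacle: the corollary is an almost immediate specialization of the main theorem, and the only point worth recording is the routine identification of $\gamma_p(G,A)$ with $\gamma_p(G)$ under the conjugation action.
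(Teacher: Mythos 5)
Your proposal is correct and matches the paper's intended argument: the paper simply declares the corollary an immediate consequence of Theorem \ref{main1}, and your identification of $A=\mathrm{Inn}(G)\cong G/\operatorname{Z}(G)$, $[G,A]=G'$, $\gamma_p(G,A)=\gamma_p(G)$, followed by an appeal to part (iii), is exactly that specialization spelled out.
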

 Note that a particular version of Corollary \ref{cor} was proved by T. Laffey in \cite{Laf2}, where he established it under the condition  $\Omega(G) \leq \operatorname{Z}(G)$.  M. Y. Xu generalized Laffey's result to the case where $G$ acts $p$-centrally on $\gamma_{p-1}(G)$, with $p$ odd (see \cite[Corollary 4]{Xu1}).  The result is also known for a special class of $p$-groups of class $\leq p$, more precisely for $p$-groups of maximal class and order $\leq p^{p+1}$.  Moreover our proof implies, in fact, that  $[\Omega_k(G/\operatorname{Z}(G)),G] = \Omega_k(G')$, for any positive integer $k$.\\

As another application of Theorem \ref{main1}, we solve the following problem posed by Y. Berkovich.\\

{\bfseries Problem 1891 \cite{Berk3}}.  Do there exist a prime $p$ and a group $G$ of order $p^p$ and exponent $p$ such that
$p^2$ divides $\operatorname{exp}(\operatorname{Aut}(G))$?\\

Let $G$ be finite group of order $\leq p^p$ and exponent $p$, and let $A$ be a $p$-Sylow of $\operatorname{Aut}(G)$.  According to Lemma \ref{L0} (iii) below,  $\gamma_{i+1}(G,A) < \gamma_{i}(G,A)$, unless $\gamma_{i}(G,A)=1$.  Therefore $|\gamma_{i}(G,A)| \leq p^{p-i+1}$, so that $|\gamma_{p}(G,A)| \leq p$.  It follows that $A$ acts $p$-centrally on $\gamma_{p}(G,A)$, hence Theorem \ref{main1} yields   
\begin{cor} \label{Ya1}
Let $G$ be a non-cyclic group of order $\leq p^p$ and exponent $p$. Then  a $p$-Sylow of $\operatorname{Aut}(G)$ has exponent $p$.
\end{cor}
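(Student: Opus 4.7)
The plan is to glue the reduction already begun in the excerpt to Theorem \ref{main1}(iii). Since $G$ has exponent $p$, every element order divides $p$ and hence $|G|$ is a power of $p$; so $G$ is a finite $p$-group and Theorem \ref{main1} is available for any $p$-subgroup of $\operatorname{Aut}(G)$ acting on $G$. Fix $A$ a Sylow $p$-subgroup of $\operatorname{Aut}(G)$; it acts faithfully on $G$.

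The excerpt already supplies, via Lemma \ref{L0}(iii), the descending chain giving $|\gamma_p(G,A)| \leq p$. My first real step is to verify the hypothesis of Theorem \ref{main1}, namely that $A$ acts $p$-centrally on $\gamma_p(G,A)$. This rests on a coprime-order observation: $\gamma_p(G,A)$ is $A$-invariant and cyclic of order at most $p$, so its automorphism group has order dividing $p-1$, which is coprime to $|A|$. Consequently $A$ acts trivially on $\gamma_p(G,A)$ and therefore fixes every element of it, a fortiori every element of order dividing $p$ (and, when $p=2$, every element of order dividing $4$, since $|\gamma_2(G,A)| \leq 2$).

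With the hypothesis verified, Theorem \ref{main1}(iii) yields $\operatorname{exp}(A) = \operatorname{exp}([G,A])$. Because $[G,A]$ is a subgroup of $G$, its exponent divides $p$, whence $\operatorname{exp}(A) \mid p$. The non-cyclicity of $G$ enters precisely to guarantee $A \neq 1$, by the classical theorem that the automorphism group of a non-cyclic finite $p$-group has order divisible by $p$; this upgrades the conclusion to $\operatorname{exp}(A) = p$. The only step that asks for any genuine care is the coprime-order verification of the $p$-central hypothesis; every other part is either bookkeeping or a direct quotation of Theorem \ref{main1} and Lemma \ref{L0}(iii), so no real obstacle remains beyond what those two results already absorb.
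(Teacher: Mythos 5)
Your proposal is correct and follows essentially the same route as the paper: the chain from Lemma \ref{L0}(iii) gives $|\gamma_p(G,A)|\leq p$, one observes that $A$ must then centralize $\gamma_p(G,A)$ (your coprimality argument and the paper's implicit $\gamma_{p+1}(G,A)=1$ are interchangeable one-liners), and Theorem \ref{main1}(iii) together with $\operatorname{exp}([G,A])\mid p$ and the non-triviality of a $p$-Sylow of $\operatorname{Aut}(G)$ for non-cyclic $G$ finishes the argument. Your explicit handling of where non-cyclicity enters is a small but welcome addition the paper leaves tacit.
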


A finite group $G$ which acts (by conjugation) $p$-centrally on itself is termed $p$-central (this terminology is due to A. Mann, see \cite{Gon}).  The $p$-central $p$-groups  have many nice properties which qualify them to be dual to the powerful $p$-groups, we refer the reader to the introduction of \cite{Gon} for some basic facts on $p$-central $p$-groups.

J. Gonz\'alez-S\'anchez and T. Weigel introduced in \cite{Gon} a  class of groups that are more general than the $p$-central ones.  They called a group $G$ $p$-central of height $k$, if every element of order $p$ lies in the $k$th term of the upper central series of $G$.  The first main result in their paper is

\begin{thm}[Gonz\'alez-S\'anchez and T. Weigel]\label{GW}
 Let $G$ be a finite $p$-central group of height $k \geq 1$, with $p$ odd.  Then $G$ has a normal $p$-complement.
 \end{thm}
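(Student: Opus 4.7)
The strategy is to apply Frobenius' normal $p$-complement criterion: it suffices to show that $N_G(Q)/C_G(Q)$ is a $p$-group for every $p$-subgroup $Q \leq G$. Fix $Q$ and $x \in N_G(Q)$ of $p'$-order; the task is to show $x \in C_G(Q)$.

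First, observe that $\Omega_1(G)$, the subgroup generated by elements of order $p$, lies in $Z_k(G)$ by hypothesis and is therefore nilpotent; being generated by $p$-elements inside a nilpotent group, it is in fact a normal $p$-subgroup of $G$. For any $y \in \Omega_1(G)$, the containment $y \in Z_k(G)$ gives $[y, x, x, \ldots, x] = 1$ with $k$ occurrences of $x$, so the action of $\langle x \rangle$ on $\Omega_1(G)$ is nilpotent. Combined with the coprime-action identity $[\Omega_1(G), \langle x \rangle, \langle x \rangle] = [\Omega_1(G), \langle x \rangle]$ (valid because $\langle x \rangle$ is a $p'$-group acting on the $p$-group $\Omega_1(G)$), iteration forces $[\Omega_1(G), x] = 1$. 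In particular, $x$ centralizes $\Omega_1(Q) \leq \Omega_1(G)$.

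The heart of the argument is a coprime-action lemma: \emph{for $p$ odd, if $P$ is a finite $p$-group with $\Omega_1(P) \leq Z_k(P)$ and $A \leq \operatorname{Aut}(P)$ is a $p'$-subgroup centralizing $\Omega_1(P)$, then $A = 1$.} The case $k=1$ is classical: $\Omega_1(P)$ being central forces $P$ to be regular in the sense of P. Hall, so the $p$-th power map is an $A$-equivariant surjection $P \to P^p$ with kernel $\Omega_1(P)$, and induction on $|P|$ applied to $P^p$ finishes the argument. For higher $k$ one inducts on $|P|$ through the central quotient $P/Z_1(P)$: $A$ still acts trivially on the image of $\Omega_1(P)$, so by induction $[P, A] \leq Z_1(P)$; the abelian-$p$-group case of the lemma (for $p$ odd, $p'$-automorphisms trivial on $\Omega_1$ of an abelian $p$-group are trivial) then gives $A$ trivial on $Z_1(P)$, whence $[P, A, A] = 1$ and coprime action yields $[P, A] = 1$.

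Applying this lemma with $P = Q$ -- valid since $\Omega_1(Q) \leq \Omega_1(G) \cap Q \leq Z_k(G) \cap Q \leq Z_k(Q)$, making $Q$ itself a $p$-central $p$-group of height at most $k$ -- and $A = \langle x \rangle$, we conclude that $x$ centralizes $Q$. Hence $N_G(Q)/C_G(Q)$ is a $p$-group and Frobenius' theorem delivers the normal $p$-complement. The main obstacle is the coprime-action lemma at height $k \geq 2$, where one must carefully track how the hypothesis $\Omega_1(P) \leq Z_k(P)$ interacts with central quotients (the property does not transparently pass to $P/Z_1(P)$); the odd-prime restriction is essential, as $S_3$ acting on $Q_8$ centralizes $\Omega_1(Q_8) = Z(Q_8)$ without acting trivially.
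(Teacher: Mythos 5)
Your overall strategy---verifying Frobenius' criterion directly by showing that every $p'$-element of $\operatorname{N}_G(Q)$ centralizes $Q$, for every $p$-subgroup $Q$---is sound and genuinely different from the paper's route, which first extracts a normal $p$-complement from $C=\operatorname{C}_G(\Omega(G))$ via Theorem \ref{main2} and then shows $|G:C|$ is a $p$-power using Proposition \ref{P7}. Your opening step is correct: $\Omega_1(G)\leq \operatorname{Z}_k(G)$ is a normal $p$-subgroup, $[\Omega_1(G),_k\langle x\rangle]=1$, and the coprime commutator identity forces $[\Omega_1(G),x]=1$.

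The gap is in your ``coprime-action lemma,'' whose proof fails at both ends. For $k=1$, the assertion that $\Omega_1(P)\leq \operatorname{Z}(P)$ forces $P$ to be regular is unjustified and is not true in general; what is known for $p$-central $p$-groups ($p$ odd) is the strictly weaker property of being strongly semi-$p$-abelian (Xu; see Lemma \ref{L4}), and your surjectivity/kernel claims for the $p$-th power map would have to be re-derived from that instead. More seriously, for $k\geq 2$ the induction through $P/\operatorname{Z}(P)$ needs both that $\Omega_1(P/\operatorname{Z}(P))\leq \operatorname{Z}_{k-1}(P/\operatorname{Z}(P))$ and that $A$ centralizes $\Omega_1(P/\operatorname{Z}(P))$; but an element of order $p$ modulo $\operatorname{Z}(P)$ need not lift to an element of order $p$ of $P$, so $A$ is only known to centralize the (possibly much smaller) image of $\Omega_1(P)$, and neither inductive hypothesis is verified. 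You flag exactly this obstacle yourself but do not resolve it, so the lemma is not proved. The repair is cheap: the statement you need is classical and requires no hypothesis on $\operatorname{Z}_k$ at all---for $p$ odd, a $p'$-automorphism of a finite $p$-group fixing every element of order $p$ is trivial (\cite[Satz IV.5.12]{Hup}; this is the same fact the paper reaches through Proposition \ref{P7} and \cite[Lemma 4.1]{Isa}, resting on Thompson's critical subgroup). Citing that in place of your lemma closes the gap and yields a complete argument that is, if anything, more direct than the paper's.
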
          
A natural variant of a $p$-central group of height $k$, is a group that acts $p$-centrally on the $k$th term of its lower central series.  For $k=1$ and $p$ odd, the two definitions coincide.   A remarkable work in this context, which is not followed up extensively, was done by  Ming-Yao Xu in \cite{Xu2}.  He proved that a finite $p$-group ($p$ odd) satisfying $\Omega_1(\gamma_{p-1}(G)) \leq \operatorname{Z}(G)$ should behave regularly: the exponent of $\Omega_n(G)$ does exceed $p^n$, and moreover $|G:G^{p^n}| \leq |\Omega_n(G)|$, for all positive integer $n$.

The next result deals with the analogue of Theorem \ref{GW} for this dual class.  Note that this generalizes Lemma B in \cite{Isa}, with only a slight more effort.    The proof follows easily from Frobenius' normal $p$-complement theorem (see \cite[Theorem 4.5, p 253]{Gor}).      
 \begin{thm}\label{main2}
 Let $G$ be a finite group which acts $p$-centrally on $\gamma_i(G)$ for some positive integer $i$.  Then $G$ has a normal $p$-complement.
 \end{thm}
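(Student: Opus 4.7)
The plan is to verify Frobenius' normal $p$-complement criterion: it suffices to show that for every $p$-subgroup $Q\le G$ and every $p'$-element $x\in N_G(Q)$ one has $[Q,x]=1$. We argue by induction on $|G|$. Given such a pair $(Q,x)$, set $H=Q\langle x\rangle$; since $\gamma_i(H)\le \gamma_i(G)$ and $\operatorname{Z}(G)\cap H\le \operatorname{Z}(H)$, the subgroup $H$ inherits the hypothesis of the theorem. If $H<G$, induction gives a normal $p$-complement for $H$, and in the semidirect product $H=Q\rtimes\langle x\rangle$ this forces $[Q,x]=1$ at once. Hence we may assume $G=Q\langle x\rangle$. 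The case $i=1$ is then immediate: the hypothesis gives that every element of order $p$ (or $4$ if $p=2$) of $Q$ lies in $\operatorname{Z}(G)$, so is centralized by $x$, and the classical theorem that a $p'$-automorphism of a $p$-group centralizing every element of order $p$ (or $4$ if $p=2$) is itself trivial yields $[Q,x]=1$. So we may further assume $i\ge 2$.

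\textbf{Main calculation.} In the reduced situation, $G/Q$ is cyclic and in particular abelian, hence $\gamma_j(G)\le Q$ for every $j\ge 2$. Coupled with $[Q,x]\le \gamma_2(G)$, the standard coprime-action identity $[Q,x]=[[Q,x],x]$ can be iterated: whenever $[Q,x]\le\gamma_j(G)$ one has $[Q,x]=[[Q,x],x]\le[\gamma_j(G),G]=\gamma_{j+1}(G)$. Thus $[Q,x]\le\gamma_i(G)$. The hypothesis now asserts that $x$ centralizes every element of order $p$ (or $4$ if $p=2$) of the $p$-group $\gamma_i(G)$, so the classical result quoted above forces $x$ to centralize all of $\gamma_i(G)$. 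A final application of the coprime-action identity then gives $[Q,x]=[[Q,x],x]\le[\gamma_i(G),x]=1$, completing the induction.

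\textbf{Main obstacle.} The genuinely non-trivial point is the containment $[Q,x]\le\gamma_i(G)$, where both the structural observation that $G/Q$ is abelian (which depends on having isolated the situation $G=Q\langle x\rangle$) and the coprime-action identity $[Q,x]=[[Q,x],x]$ must be invoked together. Everything else is either bookkeeping for the induction or an appeal to the classical coprime-automorphism theorem on $p$-groups.
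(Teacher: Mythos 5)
Your proof is correct and follows essentially the same route as the paper: both verify Frobenius' criterion by using the coprime-action identity $[Q,x,x]=[Q,x]$ to drive $[Q,x]$ down into the $i$th term of the lower central series and then invoking the classical theorem that a $p'$-automorphism of a $p$-group fixing every element of order dividing $p$ ($4$ if $p=2$) is trivial. The only difference is packaging: the paper isolates this step as Proposition \ref{P7}, stated for the relative series $\gamma_i(P,A)$ with $A=\operatorname{N}_G(P)/\operatorname{C}_G(P)$, whereas you first reduce by induction on $|G|$ to the case $G=Q\langle x\rangle$ so that the ordinary lower central series suffices.
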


Let us note that the original proof of Theorem \ref{GW}  is based on Quillen stratification (see \cite[Theorem 3.1]{Gon}), as well as Quillen's $p$-nilpotency criterion (see \cite[Theorem 3.3]{Gon}).  We will give below a more elementary proof of it, which is based on Theorem \ref{main2}, and hence on the classic Frobenius' normal $p$-complement theorem.  Note also that our proof covers the prime $2$, however in that case we have to assume that $G$ is $4$-central of height $k \geq 1$.\\

Now we turn our attention to $p$-soluble groups.  Assume that $G$ is a finite $p$-soluble group, and that a $p$-Sylow of $G$ is $p$-central ($4$-central if $p=2$) of height $k$.  In \cite{Gon} it is proved that if $k \leq p-2$ and $p \neq 2$, then the $p$-length of $G$ is $\leq 1$. In a subsequent paper E. Khukhro (see \cite{Khu}) generalized this result and  showed that the $p$-length of $G$ is bounded above by $2m+1$, where $m$ is the largest integer satisfying $p^m-p^{m-1} \leq k$. By means of a theorem of P. Hall and G. Higman (see \cite[Theorem A (ii)]{Hall}), such a result holds if one can find an appropriate bound on  the exponent of a $p$-Sylow of $G/\operatorname{O}_{p'p}(G)$.  We have the following analogues of Khukhro's results.
\begin{thm}\label{main4}
Let $G$ be a finite $p$-soluble group such that $\operatorname{O}_{p'}(G)=1$.  If a $p$-Sylow $P$ of $G$ acts $p$-centrally on $\gamma_k(P)$, for some positive integer $k$; then the exponent of a $p$-Sylow of $G/\operatorname{O}_{p}(G)$ does not exceed $p^m$, where $m$ is the largest integer satisfying $p^m-p^{m-1} \leq k$.
\end{thm}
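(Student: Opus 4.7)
The plan is to argue by contradiction, producing from a hypothetical element of $P/V$ of order exceeding $p^m$ (where $V := O_p(G)$) a non-central order-$p$ (or order-$4$ if $p=2$) element of $\gamma_k(P)$, thus violating the $p$-central hypothesis.

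\textbf{Setup.} Since $G$ is $p$-soluble with $O_{p'}(G) = 1$, the standard facts $C_G(V) \le V$ and $C_G(V/\Phi(V)) = V$ hold, so $\bar G := G/V$ is a $p$-soluble group acting faithfully on the elementary abelian $p$-group $W := V/\Phi(V)$.

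\textbf{Applying Hall-Higman.} Suppose that $\bar x \in \bar P := P/V$ has order $p^n$ with $n \ge m+1$. By Hall-Higman's Theorem B applied to the faithful action of $\bar G$ on $W$, the minimal polynomial of $\bar x$ on $W$ has degree $d \ge p^n - p^{n-1}$, with the lower value covering the exceptional Hall-Higman case. By the definition of $m$, $p^{m+1}-p^m > k$, so $d > k$. Hence there exists $v \in V$ such that $y := [v,x,x,\ldots,x]$ with $d-1$ copies of $x$ satisfies $y \notin \Phi(V)$; setting $y' := [v,x,\ldots,x]$ with $d-2$ copies of $x$, we have $y = [y',x]$, and the image $\bar y' \in W$ is not fixed by $\bar x$. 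Both $y$ and $y'$ are left-normed commutators of weight at least $k+1$ in elements of $P$, so both lie in $\gamma_k(P) \cap V$.

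\textbf{Closing via Theorem~\ref{main1}.} Let $p^s$ denote the order of $y'$ in $V$ and set $z := (y')^{p^{s-1}}$, an element of order $p$ (or $4$ when $p=2$) in $\gamma_k(P)$. The hypothesis places $z$ in $\Omega_1(\gamma_k(P)) \le Z(P)$, whence $[z,x]=1$. When $s=1$ we have $z = y'$, and its centrality contradicts $[y',x] = y \ne 1$. For $s \ge 2$, invoke Theorem~\ref{main1} on the action of $\langle x\rangle$ on $V$: its hypothesis $\gamma_p(V,\langle x\rangle) \le \gamma_p(P) \le \gamma_k(P)$ holds when $k \le p$, and for larger $k$ one applies the theorem to a deeper $\langle x\rangle$-invariant subgroup such as $\gamma_{k-p+1}(P) \cap V$. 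The regularity of $[V,x]$ (or of the relevant subgroup) granted by Theorem~\ref{main1}, combined with the commutator-collection identities valid in regular $p$-groups, converts $[z,x]=1$ into a controlled equation relating $y^{p^{s-1}}$ to an element of $\Phi(V)$, which together with $y \notin \Phi(V)$ yields the required contradiction.

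\textbf{Main obstacle.} The delicate step is the last one: one must justify that the correction terms in the commutator-collection identity $[(y')^p,x] = [y',x]^p\cdot(\text{correction})$ all lie in $\Phi(V)$ after iteration. This requires a careful depth-bookkeeping, using simultaneously the regularity of $[V,x]$ from Theorem~\ref{main1} and the fact that $\bar y'$ sits inside a Jordan block of length $d > k$ for $\bar x$ on $W$, so that the higher-order corrections cannot accidentally cancel with $y^{p^{s-1}}$.
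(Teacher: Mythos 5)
There is a genuine gap, and it sits exactly where you flag it: the conversion of the Hall--Higman conclusion into a violation of the $p$-central hypothesis. Your witnesses $y'=[v,x,\dots,x]$ live in $V=\operatorname{O}_p(G)$ and are only controlled modulo $\Phi(V)$; they can have arbitrarily large order, whereas the hypothesis $\Omega(\gamma_k(P))\leq \operatorname{Z}(P)$ only sees elements of order dividing $p$ (resp.\ $4$). Passing to $z=(y')^{p^{s-1}}$ and then trying to relate $[z,x]$ back to $[y',x]^{p^{s-1}}$ via commutator collection is not carried out, and the proposed crutch does not hold up: Theorem~\ref{main1} applied to $\langle x\rangle$ acting on $V$ requires $\langle x\rangle$ to act $p$-centrally on $\gamma_p(V,\langle x\rangle)$, which follows from the hypothesis only when $k\leq p$, and the suggested substitute subgroup for $k>p$ is not shown to yield the needed collection identities. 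Even granting regularity, nothing prevents $y^{p^{s-1}}$ from landing in $\Phi(V)$ while $y$ does not, so no contradiction is forced. In short, the final step is not a bookkeeping exercise but the actual missing idea.

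The paper closes this gap by never leaving the world of elements of order $p$ (resp.\ $4$): it takes a Thompson critical subgroup $C$ of $\operatorname{O}_p(G)$ and works inside $K=\Omega(C)$, a characteristic subgroup of exponent $p$ ($4$ if $p=2$) and class $\leq 2$ on which every nontrivial $p'$-automorphism of $\operatorname{O}_p(G)$ acts nontrivially. Hall--Higman is then applied not to $V/\Phi(V)$ but to a suitable elementary abelian section of $K$ on which a carefully chosen $\overline{g}$ (normalizing a $p$-complement $Q$ of $\operatorname{O}_{pp'}(G)$, so that $[Q,\overline{g}^{\,p^{n-1}}]\neq 1$ detects the section and $\operatorname{O}_p(Q\langle\overline{g}\rangle)=1$) acts faithfully. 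Since all iterated commutators $[v,{}_jg]$ with $v\in K$ stay in $K$, the commutator $[v,{}_{k-1}g]$ automatically has order dividing $p$ (resp.\ $4$) and lies in $\gamma_k(P)$, so the hypothesis gives $[v,{}_kg]=1$ directly, contradicting the minimal polynomial bound. Your skeleton (contradiction from an element of order $p^{m+1}$, Hall--Higman lower bound on the degree) matches the paper's, but without the critical subgroup the argument does not close.
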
 

This corollary follows easily from \cite[Theorem A (ii)]{Hall} for $p$ odd, and from \cite{Bry} for $p=2$.  
\begin{cor}
Let $G$ be a finite $p$-soluble group such that a $p$-Sylow $P$ of $G$ acts $p$-centrally on $\gamma_k(P)$, for some positive integer $k$.  Then the $p$-length of $G$ is bounded above by $2m+1$, where $m$ is the largest integer satisfying $p^m-p^{m-1} \leq k$.
\end{cor}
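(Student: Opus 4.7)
The plan is to splice Theorem \ref{main4} with the classical Hall--Higman $p$-length theorem. First I would reduce to the case $O_{p'}(G)=1$: the $p$-length is invariant under the quotient $G \to G/O_{p'}(G)$, and any Sylow $p$-subgroup $P$ of $G$ meets $O_{p'}(G)$ trivially and so maps isomorphically into $G/O_{p'}(G)$, which means the hypothesis that $P$ acts $p$-centrally on $\gamma_k(P)$ passes to the quotient. With $O_{p'}(G)=1$, Theorem \ref{main4} applies directly and tells us that a Sylow $p$-subgroup of $G/O_p(G)$ has exponent at most $p^m$.

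Next, I would pass to $\bar{G}=G/O_{p'p}(G)$, which equals $(G/O_p(G))/O_{p'}(G/O_p(G))$. By construction $\bar{G}$ is $p$-soluble with $O_{p'}(\bar{G})=1$, and its Sylow $p$-subgroups, being homomorphic images of those of $G/O_p(G)$, still have exponent at most $p^m$. This is precisely the setting of Hall--Higman's Theorem A (ii) from \cite{Hall} when $p$ is odd, and of Bryukhanova's theorem \cite{Bry} when $p=2$; both conclude that $l_p(\bar{G}) \leq 2m$.

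To finish, note that the $p$-length is unchanged when one kills a normal $p'$-quotient, so $l_p(\bar{G})=l_p(G/O_p(G))$; and in the reduced situation $O_p(G)$ is the bottom $p$-layer of the upper $p$-series of $G$ (nontrivial unless $G=1$), giving $l_p(G)=l_p(G/O_p(G))+1$. Combining the two yields $l_p(G)\leq 2m+1$. I expect no real obstacle: all the nontrivial content sits inside Theorem \ref{main4}, and what remains is the standard reduction to $O_{p'}(G)=1$ together with a direct appeal to Hall--Higman, respectively Bryukhanova, to convert the exponent bound on $G/O_p(G)$ into a $p$-length bound.
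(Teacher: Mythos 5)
Your argument is correct and is exactly the route the paper intends (the paper only sketches it): reduce to $\operatorname{O}_{p'}(G)=1$, invoke Theorem \ref{main4} to bound the exponent of a $p$-Sylow of $G/\operatorname{O}_p(G)$ by $p^m$, and convert that into $l_p(G/\operatorname{O}_p(G))\leq 2m$ via Hall--Higman for $p$ odd and Bryukhanova for $p=2$, whence $l_p(G)\leq 2m+1$. The only blemish is notational: once $\operatorname{O}_{p'}(G)=1$ one has $\operatorname{O}_{p'p}(G)=\operatorname{O}_p(G)$, so your $\bar{G}$ is really $(G/\operatorname{O}_p(G))/\operatorname{O}_{p'}(G/\operatorname{O}_p(G))$ rather than $G/\operatorname{O}_{p'p}(G)$, but this does not affect the argument since passing to that quotient changes neither the $p$-length nor the Sylow exponent bound.
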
    

The notation in this paper is standard.  Note only that $\Omega(G)$ stands for $\Omega_1(G)$ if $p$ is odd, and $\Omega_2(G)$ if $p=2$; and  $\Omega_{\{i\}}(G)$ denotes the set of all elements of $G$ having order dividing $p^i$.  

The basic results on regular $p$-groups can be found in \cite[Kap III]{Hup}, as well as in \cite{Berk1}.  We shall use them freely in the paper.

The remainder of the paper is divided into two sections.  Section 2 is devoted to proving Theorem \ref{main1}, and Section 3 to proving the remaining theorems stated in the introduction.

\section{\bf {\bf \em{\bf $p$-central action on $p$-groups}}}
\vskip 0.4 true cm
First, we collect some basic facts about the series $\gamma_i(G,A)$ defined in the introduction.
\begin{lemma}\label{L0}
Let $G$ be a finite group and $A$ be a group acting on $G$.  Then we have
\begin{enumerate}
\item $[\gamma_i(G,A),\gamma_j(A)] \leq \gamma_{i+j}(G,A)$, for $i,j \geq 1$;

\item   $\gamma_{i+1}(G,A)=\langle [\gamma_i(G,A),A,_nG], n \geq 0 \rangle$; 

\item If $A$ and $G$ are finite $p$-groups, then $\gamma_{i+1}(G,A) < \gamma_{i}(G,A)$, unless $\gamma_{i}(G,A)=1$.
\end{enumerate}
\end{lemma}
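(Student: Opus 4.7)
The plan is to prove the three items in order, each relying on commutator calculus and the inductive definition of $\gamma_i(G,A)$.

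For (i), I would induct on $j$, with the statement quantified uniformly over all $i \geq 1$. The base case $j=1$ is that $[\gamma_i(G,A), A] \leq \gamma_{i+1}(G,A)$: on a generator $[x_1, \dots, x_n]$ of $\gamma_i(G,A)$, commuting with $a \in A$ yields $[x_1, \dots, x_n, a]$, of length $n+1 \geq i+1$ with at least $i$ entries from $A$, hence in $\gamma_{i+1}(G,A)$; the extension to arbitrary elements uses the identity $[xy,z] = [x,z]^y [y,z]$ together with the normality and $A$-invariance of $\gamma_{i+1}(G,A)$. For the inductive step, write $\gamma_j(A) = [\gamma_{j-1}(A), A]$ and apply the three subgroup lemma inside $G \rtimes A$ with normal subgroup $N = \gamma_{i+j}(G,A)$ to obtain
\[ [\gamma_i(G,A), \gamma_j(A)] \leq [[\gamma_i(G,A), \gamma_{j-1}(A)], A] \cdot [[\gamma_i(G,A), A], \gamma_{j-1}(A)]. \]
The first factor lies in $\gamma_{i+j}(G,A)$ by applying the induction hypothesis at $(i, j-1)$ followed by the base case; the second, by the base case at $i$ followed by the induction hypothesis at $(i+1, j-1)$.

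For (ii), the containment $\supseteq$ is immediate from (i) combined with the $G$-invariance of $\gamma_{i+1}(G,A)$. For $\subseteq$, take a generator $c = [x_1, \dots, x_n]$ of $\gamma_{i+1}(G,A)$ and let $k$ be the largest index with $x_k \in A$. The conditions $x_1 \in G$ together with the presence of at least $i$ entries from $A$ force $k \geq i+1$, so $[x_1, \dots, x_{k-1}]$ is a generator of $\gamma_i(G,A)$ and the remaining entries $x_{k+1}, \dots, x_n$ all lie in $G$. Rewriting $c = [[x_1, \dots, x_{k-1}], x_k, x_{k+1}, \dots, x_n]$ exhibits it as an element of $[\gamma_i(G,A), A,_{n-k}G]$.

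For (iii), argue by contradiction: suppose $N := \gamma_i(G,A) = \gamma_{i+1}(G,A) \neq 1$. By (ii), whenever $\gamma_j(G,A) = N$ one has $\gamma_{j+1}(G,A) = \langle [N, A,_n G] : n \geq 0 \rangle$, which is the same expression that computes $\gamma_{i+1}(G,A) = N$; hence $\gamma_j(G,A) = N$ for every $j \geq i$. On the other hand, every generator of $\gamma_j(G,A)$ is a commutator of length $\geq j$ in $G \rtimes A$, so $\gamma_j(G,A) \leq \gamma_j(G \rtimes A)$; since $G \rtimes A$ is a finite $p$-group and thus nilpotent, choosing $j$ beyond its nilpotency class forces $N = 1$, a contradiction. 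The main obstacle is the inductive step in (i): the three subgroup lemma must be applied with a carefully chosen normal target $N$, and the induction hypothesis must be invoked at two different pairs $(i, j-1)$ and $(i+1, j-1)$, which is why the induction must be formulated uniformly in $i$.
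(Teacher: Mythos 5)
Your parts (i) and (ii) follow essentially the same route as the paper: induction on $j$ via the Three Subgroups Lemma for (i), and the "last $A$-entry" decomposition of a generator for (ii); both are correct. (One cosmetic point in (i): the displayed containment $[\gamma_i(G,A),\gamma_j(A)] \leq [[\gamma_i(G,A),\gamma_{j-1}(A)],A]\cdot[[\gamma_i(G,A),A],\gamma_{j-1}(A)]$ is not literally what the Three Subgroups Lemma asserts; the correct form is that if both right-hand factors lie in a normal subgroup $N$ of $G\rtimes A$ then so does the left-hand side. Since you explicitly invoke the lemma with $N=\gamma_{i+j}(G,A)$, which is normal in $G\rtimes A$, the argument is sound and the display is just loose notation.) Part (iii) is where you genuinely diverge: the paper argues by induction on $|G|$, cutting by $N=\gamma_i(G,A)\cap \operatorname{Z}(GA)$ (nontrivial since $\gamma_i(G,A)$ is a nontrivial normal subgroup of the $p$-group $GA$) to force $\gamma_i(G,A)\leq \operatorname{Z}(GA)$ and then kill $\gamma_{i+1}(G,A)$ via (ii). You instead use (ii) to show the series stabilizes at $N$ from index $i$ onward, observe $\gamma_j(G,A)\leq\gamma_j(G\rtimes A)$, and let nilpotence of the finite $p$-group $G\rtimes A$ annihilate $N$. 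Your route is arguably more direct — it trades the induction on the group order and the central-intersection argument for the single observation that the mixed lower central series is dominated by the ordinary one in the semidirect product — and both proofs are correct.
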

\begin{proof}

1. We proceed by induction on $j$.  Assume that $j=1$, if $c$ is a generator of $\gamma_i(G,A)$ and $a \in A$, then clearly $[c,a] \in \gamma_{i+1}(G,A)$.  Since $\gamma_{i+1}(G,A)$ is normal in $G$ and $A$-invariant, the claim follows for $j=1$.  Assume now that the result holds for $j$, it follows that $[\gamma_i(G,A),\gamma_j(A),A] \leq [\gamma_{i+j}(G,A),A] \leq \gamma_{i+j+1}(G,A) $, and $[A,\gamma_i(G,A),\gamma_j(A)] \leq [\gamma_{i+1}(G,A),\gamma_j(A)] \leq \gamma_{i+j+1}(G,A) $; the Three Subgroups Lemma yields $[\gamma_{j+1}(A), \gamma_i(G,A)] \leq  \gamma_{i+j+1}(G,A) $.  \\
2. It follows from the first property that all the subgroups   $[\gamma_i(G,A),A,_nG]$ lie in $\gamma_{i+1}(G,A)$.  Conversely let $c = [x_1,x_2,\dots,x_k]$ be a generator of $\gamma_{i+1}(G,A)$, and let be $s = \sup \{i \,|\, x_i \in A \} $.  We have  $ s \geq i+1$, so $[x_1,x_2,\dots,x_{s-1}] \in \gamma_i(G,A) $, hence $c \in [\gamma_i(G,A),A,_{k-s}G]$.\\ 

3.  Assume for a contradiction that $\gamma_{i+1}(G,A) = \gamma_{i}(G,A) \ne 1$. Let be $N = \gamma_{i}(G,A) \cap \operatorname{Z}(GA)$; we have $\gamma_{i}(G/N,A)=\gamma_{i}(G,A)/N$.  By induction on the order of $G$, if $\gamma_{i}(G,A) \neq N$ then $\gamma_{i+1}(G,A)N < \gamma_{i}(G,A)$ which contradicts our assumption.  Thus we have $\gamma_{i}(G,A) = N$.  It follows that $[\gamma_i(G,A),A,_nG]=1$ for all $n\geq 0$, so by the second property we have $\gamma_{i+1}(G,A) =1$, a contradiction.                     
\end{proof}
In the same spirit, the Three Subgroups Lemma yields
\begin{lemma}\label{L1}
Let $G$ be a group and $A$ be a group acting on $G$.  Let $k$ be an integer $\geq 2$, $H=[G,A]$, and  assume that  $A$ acts $p$-centrally on  $ \gamma_k(G,A)$.  Then $\Omega(\gamma_{k-1}(H)) \leq \Omega( \gamma_k(G,A)) \leq  \operatorname{Z}(H)$.    
\end{lemma}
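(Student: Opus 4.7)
The plan is to prove the two inclusions $\Omega(\gamma_{k-1}(H)) \leq \Omega(\gamma_k(G, A))$ and $\Omega(\gamma_k(G, A)) \leq \operatorname{Z}(H)$ separately.

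For the first inclusion, I would establish the stronger containment $\gamma_{k-1}(H) \leq \gamma_k(G, A)$, from which any element of order dividing $p$ (or $4$ if $p=2$) in $\gamma_{k-1}(H)$ automatically lies in $\Omega(\gamma_k(G, A))$. I would prove this by induction on $k \geq 2$, the base case being the equality $H = \gamma_2(G, A)$. Assuming $\gamma_j(H) \leq \gamma_{j+1}(G, A)$, one has $\gamma_{j+1}(H) \leq [\gamma_{j+1}(G, A), H]$, so it suffices to show $[\gamma_{j+1}(G, A), [G, A]] \leq \gamma_{j+2}(G, A)$. I would derive this from the Three Subgroups Lemma applied to $X = \gamma_{j+1}(G, A)$, $Y = G$, $Z = A$, with normal target $N = \gamma_{j+2}(G, A)$: using normality of $\gamma_{j+1}(G, A)$ in $G$ together with Lemma~\ref{L0}(1), both $[X, Y, Z] \leq [\gamma_{j+1}(G, A), A] \leq N$ and $[Z, X, Y] \leq [\gamma_{j+2}(G, A), G] \leq N$, so the lemma yields $[Y, Z, X] = [H, \gamma_{j+1}(G, A)] \leq N$.

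For the second inclusion, set $N_0 = \Omega(\gamma_k(G, A))$; the $p$-central hypothesis is exactly $[N_0, A] = 1$. The key observation is that $N_0$ is stable under $G$-conjugation: for any $n \in N_0$ and $g \in G$, the conjugate $n^g$ lies in the normal subgroup $\gamma_k(G, A)$ and has the same order as $n$, hence $n^g \in N_0$ and is therefore fixed by every element of $A$. A direct calculation then gives, for any $g \in G$ and $a \in A$,
\[
n^{[g,a]} = n^{g^{-1} a^{-1} g a} = (((n^{g^{-1}})^{a^{-1}})^{g})^{a} = ((n^{g^{-1}})^{g})^{a} = n^{a} = n.
\]
Since the commutators $[g, a]$ generate $H$ as a subgroup and $N_0 \leq \gamma_k(G, A) \leq H$, this yields $N_0 \leq \operatorname{Z}(H)$.

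The only delicate point is the book-keeping in the Three Subgroups Lemma during the induction step and checking normality wherever it is invoked; both properties are already recorded in Lemma~\ref{L0} and the preliminary remarks. The direct conjugation argument in Step~2 is preferable to any attempt to bound $[N_0, G]$ inside $N_0$, which need not hold in general.
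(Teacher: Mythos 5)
Your proof is correct and follows essentially the same route as the paper: the key step in both is the containment $\gamma_{k-1}(H)\leq\gamma_k(G,A)$, proved by induction via the Three Subgroups Lemma with the same commutator estimates. The only cosmetic difference is in the second inclusion, where the paper applies the Three Subgroups Lemma to $[\Omega(\gamma_k(G,A)),G,A]=[A,\Omega(\gamma_k(G,A)),G]=1$ while you unwind the same fact by the explicit conjugation computation on generators $[g,a]$ of $H$.
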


\begin{proof}

We have $[\Omega( \gamma_k(G,A)),G,A]=[A,\Omega(\gamma_k(G,A)),G]=1$.   The Three Subgroups Lemma yields $[H,\Omega(\gamma_k(G,A))]=1$.  Since $\gamma_k(G,A) \leq \gamma_2(G,A)=H$, it follows that  $\Omega(\gamma_k(G,A)) \leq  \operatorname{Z}(H)$. 

Now we claim that $\gamma_{k-1}(H) \leq \gamma_k(G,A)$.  For $k=2$ this is trivial.  Assume that this is proved for $k$, and put $K=\gamma_k(G,A)$.  We have $[K,A,G]  \leq \gamma_{k+1}(G,A)$, and $[G,K,A] \leq [K,A] \leq \gamma_{k+1}(G,A)$.  It follows again from the Three Subgroups Lemma that $[H,K] \leq \gamma_{k+1}(G,A)$.  By assumption $\gamma_{k-1}(H) \leq K$, therefore $\gamma_{k}(H) \leq \gamma_{k+1}(G,A)$.
\end{proof}
In \cite{Xu1} Ming-Yao Xu showed that if a finite $p$-group $G$, $p$ odd, satisfies $\Omega_1(\gamma_{p-1}(G)) \leq \operatorname{Z}(G)$, then $G$ is strongly semi- $p$-abelian, in other words $G$ satisfies the property :
$$(xy^{-1})^{p^n}=1 \Leftrightarrow x^{p^n}=y^{p^n} \mbox{ for any postive integer } n.$$
 Such a  group shares many properties with the regular ones (see \cite{Xu2}).  For instance the exponent of $\Omega_n(G)$ does exceed $p^n$, and $|G:G^{p^n}| \leq |\Omega_n(G)|$. The above does not hold for $p=2$ as shows the quaternion group.  However if one requires that $\Omega_2(G) \leq \operatorname{Z}(G)$ we cover the case of  $2$-central $2$-groups which is covered for instance in \cite[Corollary 2.2]{Isa}.  Hence we have
\begin{lemma}[Ming-Yao Xu]\label{L4}
Let $G$ be a finite $p$-group such that $\Omega(\gamma_{p-1}(G)) \leq \operatorname{Z}(G)$.  Then for any positive integer $n$, we have 
\begin{description}
\item[(i)] the elements of $G$ of order dividing $p^n$ form a subgroup;
\item[(ii)] for $p$ odd, $|G:G^{p^n}| \leq |\Omega_n(G)|$. 
\end{description}
  
\end{lemma}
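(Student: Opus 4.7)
The plan is to reduce both assertions to a single strong equivalence. For odd $p$, the hypothesis $\Omega_1(\gamma_{p-1}(G)) \leq \operatorname{Z}(G)$ is precisely what Ming-Yao Xu uses in \cite{Xu1} to show that $G$ is strongly semi-$p$-abelian, i.e.\ for every $n \geq 1$,
$$(xy^{-1})^{p^n} = 1 \;\Longleftrightarrow\; x^{p^n} = y^{p^n}.$$
For $p = 2$ the hypothesis reads $\Omega_2(G) \leq \operatorname{Z}(G)$ (since $\gamma_{p-1}(G) = G$), making $G$ a $4$-central $2$-group; the same equivalence is then available from \cite[Corollary 2.2]{Isa}. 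I would treat this equivalence as a black box and deduce (i) and (ii) directly from it.

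Given the equivalence, (i) is immediate: if $x, y \in \Omega_{\{n\}}(G)$, then $x^{p^n} = 1 = (y^{-1})^{p^n}$, so applying the reverse implication to the pair $(x, y^{-1})$ forces $(xy)^{p^n} = 1$. Together with the obvious closure under inverses, this shows that $\Omega_{\{n\}}(G)$ is a subgroup, which is then $\Omega_n(G)$.

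For (ii), I would reinterpret the equivalence as a statement about the fibers of the $p^n$-th power map $\phi : G \to G$, $\phi(x) = x^{p^n}$: two elements share a $\phi$-value iff their ratio lies in $\Omega_n(G)$, so every nonempty fiber is a coset of $\Omega_n(G)$ and has size exactly $|\Omega_n(G)|$. Counting gives $|\phi(G)| = |G|/|\Omega_n(G)|$, and since $\phi(G) \subseteq G^{p^n}$, we obtain $|G : G^{p^n}| \leq |\Omega_n(G)|$, as required.

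The only genuine obstacle is the strongly semi-$p$-abelian equivalence itself, which the introductory discussion explicitly outsources to \cite{Xu1} for odd primes and to \cite[Corollary 2.2]{Isa} for $p = 2$; once that input is granted, both claims reduce to a short counting argument and the lemma follows essentially as bookkeeping.
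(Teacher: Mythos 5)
Your proposal is correct and follows essentially the same route as the paper, which likewise offers no independent proof but derives the lemma from Xu's strongly semi-$p$-abelian property (citing \cite{Xu1} for odd $p$ and \cite[Corollary 2.2]{Isa} for $p=2$). The only difference is that you explicitly carry out the short deduction of (i) and (ii) from that equivalence — closure of $\Omega_{\{n\}}(G)$ and the fiber-counting for the $p^n$-th power map — whereas the paper delegates these consequences to \cite{Xu2}; both steps are done correctly.
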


The above result combined with Lemma \ref{L1} yields

\begin{cor}\label{C5}
Let $G$ be a finite $p$-group and $A$ be  a group  acting on $G$, such that $A$ acts $p$-centrally on $\gamma_p(G,A)$. Then for any positive integer $n$, and for $H=[G,A]$, we have 
\begin{description}
\item[(i)] $\operatorname{exp}(\Omega_n(H)) \leq p^n$;
\item[(ii)] for $p$ odd, $|H:H^{p^n}| \leq |\Omega_n(H)|$. 
\end{description}

\end{cor}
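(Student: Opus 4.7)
The plan is to reduce Corollary \ref{C5} to Xu's regularity-type result (Lemma \ref{L4}) applied directly to the subgroup $H = [G,A]$. Since Lemma \ref{L4} bears on a $p$-group $H$ satisfying $\Omega(\gamma_{p-1}(H)) \leq \operatorname{Z}(H)$, the only thing to check is that $H$ meets this hypothesis under the standing assumption that $A$ acts $p$-centrally on $\gamma_p(G,A)$.

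The verification is immediate from Lemma \ref{L1} with $k = p$: that lemma is applicable because $p \geq 2$, and its conclusion reads precisely
$$\Omega(\gamma_{p-1}(H)) \leq \Omega(\gamma_p(G,A)) \leq \operatorname{Z}(H),$$
which is the hypothesis required to feed $H$ into Lemma \ref{L4}.

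Once this is recorded, items (i) and (ii) of the corollary follow verbatim from the two conclusions of Lemma \ref{L4} applied to $H$, with item (ii) inheriting the restriction to odd $p$ from that lemma. There is no real obstacle here; the corollary is essentially a one-line synthesis of Lemmas \ref{L1} and \ref{L4}, and the substantive work has already been absorbed into those two statements.
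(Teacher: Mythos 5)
Your proposal is correct and is essentially identical to the paper's own (one-line) derivation: the author likewise obtains $\Omega(\gamma_{p-1}(H)) \leq \Omega(\gamma_p(G,A)) \leq \operatorname{Z}(H)$ from Lemma \ref{L1} with $k=p$ and then applies Lemma \ref{L4} to $H$. No gaps.
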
    
The following  lemma generalizes a result of I. M. Isaacs (see \cite [Theorem 2.1]{Isa}).     
 \begin{lemma}\label{L2}
Let $G$ be a finite $p$-group and $A$ be  a group of order $p$ acting on $G$, such that $A$ acts $p$-centrally on $\gamma_p(G,A)$.  Then $[G,A]$ has exponent at most $p$.     
\end{lemma}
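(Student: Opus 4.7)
Write $H := [G, A]$ and let $\alpha$ be a generator of $A$. By Lemma~\ref{L1} (applied with $k = p$), $\Omega(\gamma_{p-1}(H)) \leq \Omega(\gamma_p(G,A)) \leq Z(H)$, and Lemma~\ref{L4} then shows that $\Omega_1(H)$ is a subgroup of $H$, characteristic in $H$ and so normal in $GA$. Since $H$ is the normal closure in $GA$ of the set $\{[g,\alpha] : g \in G\}$, proving the lemma reduces to showing that $[g,\alpha]^p = 1$ for every $g \in G$. Fix such a $g$, set $y := [g,\alpha]$, and record the \emph{norm relation} obtained by iterating $[g, uv] = [g,v][g,u]^v$ together with $\alpha^p = 1$:
\[
y \cdot y^\alpha \cdots y^{\alpha^{p-1}} \;=\; [g, \alpha^p] \;=\; 1.
\]

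The case $p = 2$ falls out directly. The norm relation reads $y^\alpha = y^{-1}$, and since here $\gamma_p(G,A) = H$, the hypothesis asserts that $A$ centralizes every element of order at most $4$ in $H$. If $y$ had order $2^k$ with $k \geq 2$, then $w := y^{2^{k-2}} \in H$ has order $4$, so $w^\alpha = w$; but also $w^\alpha = (y^\alpha)^{2^{k-2}} = w^{-1}$, forcing $y^{2^{k-1}} = w^2 = 1$, contrary to the order of $y$. Hence $y^2 = 1$.

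For odd $p$, I would induct on $|G|$. The $A$-invariant subgroup $G_0 := \langle g^{\alpha^i} : 0 \leq i < p \rangle$ inherits the hypothesis since $\gamma_p(G_0, A) \leq \gamma_p(G, A)$; if $G_0 < G$, induction gives $y^p = 1$. Otherwise $G = G_0$, so $G$ is generated by a single $A$-orbit and the analysis concentrates in the nilpotent subgroup $U := \langle y, y^\alpha, \ldots, y^{\alpha^{p-1}} \rangle \leq H$. The plan is to apply Hall--Petrescu-style commutator collection inside $U$ to rewrite the norm relation as $y^p \cdot c = 1$, where $c$ is a product of commutators of weight $\geq 2$ in the $y^{\alpha^i}$. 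Such a $c$ sits deep in the mixed lower central series $\gamma_k(G,A)$; Lemma~\ref{L1} locates where, the strongly semi-$p$-abelian structure of $H$ supplied by Lemma~\ref{L4} allows free manipulation of $p$-th powers, and the centralizing hypothesis on $\gamma_p(G,A)$ trivializes the $A$-action at the appropriate depth, so $c$ should collapse to the identity and yield $y^p = 1$.

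The principal obstacle is exactly this last step for odd $p$: delicate bookkeeping is needed to verify that every correction commutator emerging from Hall--Petrescu lies in a sufficiently high term of $\gamma_k(G,A)$, where the $A$-centralizing hypothesis and the semi-$p$-abelian property of $H$ can be made to cooperate. This appears to demand a nested induction on commutator weight inside $U$, simultaneously tracking how $A$ and the inner automorphisms of $G$ act at each level.
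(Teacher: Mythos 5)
Your $p=2$ case is complete and correct: the norm relation $yy^{\alpha}=1$ forces $y^{\alpha}=y^{-1}$, the $4$-central hypothesis on $\gamma_2(G,A)=H$ kills any element of order $4$ in $\langle y\rangle$, and the reduction to generators is legitimate because Lemma~\ref{L1} plus Lemma~\ref{L4} show that the elements of order dividing $p$ in $H$ form a subgroup. This is in fact a tidier route than the paper's own treatment of $p=2$.

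For odd $p$, however, there is a genuine gap, and you have located it yourself: the entire content of the lemma is the claim that the Hall--Petrescu correction term $c$ in $y\cdot y^{\sigma}\cdots y^{\sigma^{p-1}}=y^{p}c$ vanishes, and your proposal only asserts that it ``should collapse.'' Two specific problems. First, your induction does not make progress: when $G=G_0=\langle g^{\alpha^{i}}\rangle$ you are left with the full problem and no smaller case to appeal to, so the reduction buys nothing. The paper instead inducts on $|G|$ by passing to the pair $([G,A],A)$ --- legitimate since $[G,A]<G$ by Lemma~\ref{L0}(iii) and $\gamma_p([G,A],A)\leq\gamma_p(G,A)$ --- which yields $\operatorname{exp}([G,A,A])\leq p$, hence (via Lemma~\ref{L0}(ii) and Corollary~\ref{C5}) $\operatorname{exp}(\gamma_3(G,A))\leq p$ and $\gamma_{p+1}(G,A)=1$. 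Second, the bookkeeping you defer is exactly what this structural information replaces: forming the semidirect product $K=[G,A]A$ and using the Three Subgroups Lemma to get $\gamma_{i-1}(K)\leq\gamma_i(G,A)$, one finds that $K$ has class at most $p-1$ (so is regular) and $\operatorname{exp}(K')\leq p$, hence $K$ is $p$-abelian. At that point no commutator collection is needed at all: the norm relation becomes $1=x^{1+\sigma+\cdots+\sigma^{p-1}}=(x\sigma^{-1})^{p}\sigma^{p}=x^{p}$ in $K$. Without establishing regularity and $p$-abelianness of some ambient group containing both $y$ and $\sigma$ (or an equivalent control on where the correction commutators land), your odd-$p$ argument does not close.
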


\begin{proof} 
Assume first that $p$ is odd.  By induction on $|G|$ we may assume that the result holds for any smaller $p$-group.
We have $[G,A] < G$, thus  by induction $[G,A,A]$ has at most exponent $p$.  This yields $[G,A,A] \leq \Omega_1(H)$; since $\Omega_1(H)$ is normal in $G$, Lemma \ref{L0}(ii) and Corollary \ref{C5} (i) imply that $\gamma_3(G,A)$ has exponent $\leq p$.  Our condition on the action of $A$ on $G$ implies in particular that $[\gamma_p(G,A),A]=1$, and again Lemma \ref{L0}(ii) yields $\gamma_{p+1}(G,A)=1$.  

Now let  $K$ denote the semidirect product $ [G,A]A$.  We claim that $\gamma_p(K)=1$.  This follows at once if one proves that  $\gamma_{i-1}(K) \leq \gamma_{i}(G,A)$ for all $i \geq 3$.  Assume that $i=3$.  It follows easily from the Three Subgroup Lemma that $[[G,A],[G,A]] \leq \gamma_3(G,A)$, and clearly we have $[[G,A],A] \leq \gamma_3(G,A)$.  This amounts to saying that $[G,A]/\gamma_3(G,A)$ lies in the center of $K/\gamma_3(G,A)$, but since $K/[G,A] \cong A$ is cyclic, we have $K/\gamma_3(G,A)$ is abelian. Hence $\gamma_2(K) \leq \gamma_3(G,A)$.  Now by induction we may assume that $\gamma_{i-1}(K) \leq \gamma_i(G,A)$.  We have $[\gamma_i(G,A),A] \leq \gamma_{i+1}(G,A)$, and the Three Subgroups Lemma implies that $[\gamma_i(G,A),[G,A]]\leq \gamma_{i+1}(G,A)$.  Thus   $[\gamma_{i-1}(K),K] \leq [\gamma_i(G,A),K] \leq \gamma_{i+1}(G,A)$.

As $\gamma_p(K)=1$, it follows that $K$ is regular;  moreover as $K' \leq \gamma_{3}(G,A)$,  we have $\operatorname{exp}(K')\leq p$. Thus $(xy)^p=x^py^p$ for all $x,y \in K$; in other words $K$ is $p$-abelian.   

Let be $g \in G$, $\sigma \in A$ and set $x = [g,\sigma]$.  we have
$$g^{-1}g^{\sigma^p}=x^{1+\sigma+...+\sigma^{p-1}}=1$$

Since $K$ is $p$-abelian, we have
$$x^{1+\sigma+...+\sigma^{p-1}}=(x\sigma^{-1})^p \sigma^p = x^p \sigma^{-p} \sigma^p = x^p$$
This shows that $[G,A]$ is generated by elements of order not exceeding $p$; as $K$ is regular it follows that the exponent of $[G,A]$ is at most $p$.

For $p=2$, we may assume that $[G,A,A]$ has exponent $\leq 2$.  Under the above notation let be $y=[x,\sigma]$.  We have $1=g^{-1}g^{\sigma^2}=x^2y$ and  $y^2=1$, hence $x^4=1$.  As $A$ fixes every element of order $4$ in $[G,A]$, we have $y=[x,\sigma]=1$.  Thus $x^2=1$ and on the other hand $\gamma_{3}(G,A)=1$ . Now as $[[G,A],[G,A]] \leq \gamma_{3}(G,A)=1$,  it follows that $[G,A]$ is abelian , and it is generated by elements of order $\leq 2$, the result follows.  

\end{proof}

The following lemma is useful for induction.

 \begin{lemma}\label{L6}
Let $G$ be a finite $p$-group and $A$ be  a $p$-group acting on $G$. If $A$ acts $p$-centrally on $\gamma_{k}(G,A)$, for some $1 \leq k \leq p$, then the same holds if we replace $G$ by $G/ \Omega_i(H)$ for any positive integer $i$, where $H$ denotes $[G,A]$.  

\end{lemma}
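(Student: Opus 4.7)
The plan is to set $N = \Omega_i(H)$, pass to $\bar{G} = G/N$, and verify the hypothesis directly. Since $H$ is $A$-invariant and normal in $G$, so is $N$, and a standard commutator identity gives $\gamma_k(\bar{G},A) = \gamma_k(G,A)\, N/N$. Any element of $\Omega(\gamma_k(\bar{G}, A))$ therefore lifts to some $y \in \gamma_k(G, A)$ with $y^p \in N$ (or $y^4 \in N$ when $p = 2$), and the goal reduces to showing $[y, \sigma] \in N$ for every $\sigma \in A$.

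First I would extract the regularity of $H$. Because $k \leq p$, the hypothesis forces $A$ to act $p$-centrally on $\gamma_p(G, A) \leq \gamma_k(G,A)$, so Lemma~\ref{L1} applied with $k$ replaced by $p$ yields $\Omega(\gamma_{p-1}(H)) \leq Z(H)$. Hence Lemma~\ref{L4} (together with the Xu/Isaacs results cited there) implies that $H$ is strongly semi-$p$-abelian for $p$ odd and satisfies the analogous $4$-central regularity for $p = 2$; in particular $\Omega_n(H) = \Omega_{\{n\}}(H)$ by Corollary~\ref{C5}(i), and the equivalence $(ab^{-1})^{p^i} = 1 \Leftrightarrow a^{p^i} = b^{p^i}$ holds for all $a, b \in H$.

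Next I would locate $y^{p^i}$: from $y^p \in \Omega_i(H)$ and the regularity just obtained, $y^{p^{i+1}} = 1$ (respectively $y^{2^{i+2}} = 1$ when $p=2$), so $y^{p^i}$ (respectively $y^{2^i}$) has order at most $p$ (at most $4$ for $p = 2$). Since this power lies in $\gamma_k(G, A)$, it belongs to $\Omega(\gamma_k(G, A))$ and is therefore fixed by $A$ by hypothesis; hence $(y^\sigma)^{p^i} = (y^{p^i})^\sigma = y^{p^i}$.

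Assuming $k \geq 2$, $y$ and $y^\sigma$ both lie in $H$, so the strongly semi-$p$-abelian equivalence applied with $a = y^\sigma$ and $b = y$ converts the identity $(y^\sigma)^{p^i} = y^{p^i}$ into $(y^\sigma y^{-1})^{p^i} = 1$. Since $y^\sigma y^{-1} = [y,\sigma]^{y^{-1}}$ is conjugate to $[y, \sigma]$, this gives $[y,\sigma]^{p^i} = 1$ and hence $[y,\sigma] \in \Omega_i(H) = N$. The main obstacle is the first step, ensuring enough regularity on $H$ uniformly in $p$ (especially for $p = 2$, where one needs $\Omega_2(H) \leq Z(H)$ coming from Lemma~\ref{L1} with $k = p = 2$); a secondary subtlety is the case $k = 1$, where $y$ need not lie in $H$ and one adapts the argument by working with $y^\sigma y^{-1} \in H$ together with the fact that $A$ acts trivially on $G/H$.
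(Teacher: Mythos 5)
Your strategy is genuinely different from the paper's: you want to deduce $[y,\sigma]^{p^i}=1$ directly from $(y^{\sigma})^{p^i}=y^{p^i}$ via Xu's strongly semi-$p$-abelian property of $H$, whereas the paper never uses that property. Instead it treats a lift $x$ of an element of order dividing $p$ as inducing on $K=HA$ an inner automorphism of order at most $p$ that acts $p$-centrally on $\gamma_p(K,\langle x\rangle)$, applies Lemma~\ref{L2} to get $[x,A]^p=1$, and then passes from $i=1$ to general $i$ by induction using $\Omega_1(H/\Omega_i(H))=\Omega_{i+1}(H)/\Omega_i(H)$. For $p$ odd and $k\geq 2$ your argument is correct and arguably shorter, at the cost of importing Xu's theorem from \cite{Xu1} rather than only the statements recorded in Lemma~\ref{L4}.

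There are, however, two genuine gaps. First, the case $p=2$: you need the implication $a^{2^i}=b^{2^i}\Rightarrow(ab^{-1})^{2^i}=1$ for $a,b\in H$ under the sole hypothesis $\Omega_2(H)\leq\operatorname{Z}(H)$. This is precisely the piece of the regularity package the paper does \emph{not} supply: Xu's strongly semi-$p$-abelian theorem is cited only for odd $p$, and Lemma~\ref{L4} records for $p=2$ only that $\Omega_{\{n\}}$ is a subgroup (part (ii) is explicitly restricted to odd $p$). You would have to prove this $4$-central semi-abelian property yourself, and the natural induction for it (passing to $H/\Omega_1(H)$ and asking whether the $\Omega$-condition survives) is essentially the statement you are trying to prove, so the argument risks circularity. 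Second, the case $k=1$ is not actually handled: there $y\in\gamma_1(G,A)=G$ need not lie in $H$, so the semi-abelian equivalence in $H$ cannot be applied to the pair $(y^{\sigma},y)$, and neither $G$ nor $\langle y\rangle H$ is known to be strongly semi-$p$-abelian --- the hypothesis only says $A$ fixes the elements of order dividing $p$ in $G$, not that they are central in $G$, so Xu's theorem does not apply to $G$. Knowing that $A$ acts trivially on $G/H$ does not convert $(yx)^{p^i}=y^{p^i}$ with $x=[y,\sigma]\in H$ into $x^{p^i}=1$. Note that $k=1$ is not a fringe case: the proof of Theorem~\ref{main1}(iv) invokes Lemma~\ref{L'6}, hence Lemma~\ref{L6}, with $k=p-1=1$ when $p=2$. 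The paper's detour through Lemma~\ref{L2} sidesteps both issues at once, since conjugation by $x$ normalizes $K=HA$ even when $x\notin H$, and Lemma~\ref{L2} is proved separately for $p=2$.
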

\begin{proof}
Assume first that $i=1$ and  let us denote $G/ \Omega_1(H)$ by $\overline{G}$.  We have $\gamma_k(\overline{G},A)=\overline{\gamma_{k}(G,A)}$. Let $\overline{x} \in \gamma_k(\overline{G},A)$ be an element of order $\leq p$ ($\leq 4$ if $p=2$), so we can assume that $x \in \gamma_{k}(G,A)$. As $\gamma_{p}(G,A) \leq \gamma_{k}(G,A)$, it follows from Corollary \ref{C5} (i) that $x^p \in \Omega(\gamma_{k}(G,A))$. Therefore $[x^p,A]=1$, and by Lemma \ref{L1} $x^p$ lies in the center of $H$.  Thus $x$ induces  on  $K=HA$ (by conjugation) an automorphism of order $\leq p$.    

It follows easily that $\gamma_k(K,\langle x \rangle) \leq \gamma_{k-1}(H,\langle x \rangle) \leq \gamma_{k-1}(H) $.  By Lemma \ref{L1}, $\gamma_{k-1}(H) \leq \operatorname{Z}(H)$, thus the inner automorphism induced by $x$ on $K$ acts $p$-centrally on $\gamma_k(K,\langle x \rangle)$, so it acts $p$-centrally on $\gamma_{p}(K,\langle x \rangle)$ .  It follows at once from Lemma \ref{L2} that $[x,A]^p=1$, that is $[\overline{x},A]=\overline{1}$. 

Now by induction we may assume the result is true for $G/\Omega_i(H)$.  Since $[G/\Omega_i(H),A]=H/\Omega_i(H)$, Corollary \ref{C5} (i) implies that $\Omega_1([G/\Omega_i(H),A])= \Omega_{i+1}(H)/\Omega_i(H)$, so by the first step the result holds for $G/\Omega_{i+1}(H)$.        
\end{proof}
It follows from the lemma above that
\begin{lemma}\label{L'6}
Let $L$ denote $\gamma_{k}(G,A)$. Under the assumptions of Lemma \ref{L6}, we have $A$ acts $p$-centrally on $L/\Omega_i(L)$, and $[A,\Omega_{i}(L)] \leq \Omega_{i-1}(L)$, for all positive integer $i$.  
\end{lemma}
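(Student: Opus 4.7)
The plan is to transfer Lemma \ref{L6} across the second isomorphism theorem. Set $H=[G,A]$; since $k\geq 2$ we have $L=\gamma_k(G,A)\leq H$, and by Corollary \ref{C5}(i) the subgroup $\Omega_i(H)$ is precisely the set of elements of $H$ killed by $p^i$. This forces
\[
\Omega_i(L)\;=\;L\cap\Omega_i(H),
\]
so $\Omega_i(L)$ is $A$-invariant (an intersection of two $A$-invariant subgroups) and the second isomorphism theorem supplies the $A$-equivariant isomorphism
\[
L/\Omega_i(L)\;\cong\;L\,\Omega_i(H)/\Omega_i(H)\;=\;\gamma_k\bigl(G/\Omega_i(H),A\bigr).
\]
Applying Lemma \ref{L6} to $G/\Omega_i(H)$ shows that $A$ acts $p$-centrally on the right-hand side, hence on $L/\Omega_i(L)$, which is the first conclusion.

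For the commutator inclusion, the case $i=1$ is immediate from the $p$-central hypothesis. For $i\geq 2$, apply the first conclusion with $i-1$ in place of $i$: every element of $L/\Omega_{i-1}(L)$ of order dividing $p$ (dividing $4$ if $p=2$) is centralized by $A$. If $x\in\Omega_i(L)$, then its image $\bar x\in L/\Omega_{i-1}(L)$ meets this bound, because $(x^p)^{p^{i-1}}=x^{p^i}=1$ places $x^p$ in $\Omega_{i-1}(L)$ when $p$ is odd, and the parallel computation $(x^4)^{2^{i-1}}=x^{2^{i+1}}=1$ places $x^4$ in $\Omega_{i-1}(L)$ when $p=2$. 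Consequently $\bar x$ is fixed by $A$, giving $[x,A]\subseteq\Omega_{i-1}(L)$.

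The main obstacle, if any, is the set-theoretic identification $\Omega_i(H)=\{h\in H:h^{p^i}=1\}$ supplied by Corollary \ref{C5}(i); without it one could not match $\Omega_i(L)$ with $L\cap\Omega_i(H)$, and the transfer to the quotient $G/\Omega_i(H)$ would break down. Once that identification is in hand the argument is purely formal: the second isomorphism theorem for the first assertion, and an order-of-elements count for the second.
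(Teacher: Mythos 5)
Your proof is correct and follows essentially the same route as the paper's: both identify $\Omega_i(L)=L\cap\Omega_i(H)$ via Corollary \ref{C5}(i), transfer Lemma \ref{L6} across the second isomorphism theorem for the first assertion, and obtain the second by noting that every element of $\Omega_i(L)/\Omega_{i-1}(L)$ has order dividing $p$ (resp.\ $4$), so the first assertion applies. Your explicit restriction to $k\geq 2$ is also implicitly present in the paper's argument (the identification $\Omega_i(L)=L\cap\Omega_i(H)$ requires $L\leq H$), so nothing is lost relative to the paper's own proof.
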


\begin{proof}
We have 
$\gamma_k(G/\Omega_i(H),A)=\gamma_k(G,A)\Omega_i(H)/\Omega_i(H)=L\Omega_i(H)/\Omega_i(H)$.  Also $L\Omega_i(H)/\Omega_i(H)$ and $L/L \cap \Omega_i(H)$ are canonically isomorphic as $A$-groups.   It follows from Corollay \ref{C5} (i) that $L \cap \Omega_i(H)=\Omega_i(L)$. Now Lemma \ref{L6} implies that $A$ acts $p$-centrally on $L/\Omega_i(L)$.   Also Corollay \ref{C5} (i) implies that $\Omega_1(L/\Omega_{i-1}(L))= \Omega_{i}(L)/\Omega_{i-1}(L)$, thus $[A,\Omega_{i}(L)] \leq \Omega_{i-1}(L)$.

\end{proof}
In Lemma \ref{L6} as well as Lemma \ref{L'6} we assumed that $A$ is a $p$-group. The following result shows that this assumption can be dropped if one assumes that $A$ acts faithfully on $G$. The result in a seemingly weaker form is classic (see \cite[Satz IV.5.12]{Hup}). 
 \begin{prop}\label{P7}
Let $G$ be a finite $p$-group and $A \leq \operatorname{Aut}(G)$. If $A$ acts $p$-centrally on $\gamma_{i}(G,A)$ for some $ i \geq 1$, then $A$ is a $p$-group. 
\end{prop}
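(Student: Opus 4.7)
The plan is to argue by contradiction: suppose some prime $q \neq p$ divides $|A|$, and by Cauchy pick $B \leq A$ of order $q$. The goal will be to apply the classical result \cite[Satz IV.5.12]{Hup} --- that a $p'$-automorphism of a finite $p$-group fixing $\Omega_1$ (or $\Omega_2$ when $p=2$) is trivial --- to the action of $B$ on the $p$-group $[G,B]$, and deduce $B = 1$, a contradiction.

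First I would observe that $\gamma_j(G,B) \leq \gamma_j(G,A)$ for every $j$, since every generating commutator of the former is a generator of the latter; hence $B$ inherits the hypothesis and acts $p$-centrally on $\gamma_i(G,B)$. Next, using that $B$ is a $p'$-group acting on the $p$-group $G$, I would invoke the standard coprime-action identity $[G,B,B] = [G,B]$; combining this with the recursive description of $\gamma_{j+1}(G,B)$ in Lemma \ref{L0}(2) and an easy induction on $j$, it follows that $\gamma_j(G,B) = [G,B]$ for every $j \geq 2$. In particular $B$ fixes every element of order $\leq p$ (or $\leq 4$ when $p=2$) in $H := [G,B]$: for $i \geq 2$ this is the hypothesis on $\gamma_i(G,B) = H$ directly, and for $i = 1$ it holds because $\gamma_1(G,B) = G \supseteq H$.

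Huppert's theorem applied to the $p$-group $H$ with the coprime automorphism group $B$ then yields $[H,B] = 1$, while coprime action again gives $[H,B] = [[G,B],B] = [G,B] = H$; hence $H = 1$, meaning $B$ centralizes $G$, and the faithfulness of $A \leq \operatorname{Aut}(G)$ forces $B = 1$, contradicting $|B| = q$. The main point that needs some care is the collapse $\gamma_j(G,B) = [G,B]$ for $j \geq 2$, since the mixed two-parameter series in Lemma \ref{L0}(2) is not literally the lower central series of $[G,B]$ and involves generators with $G$-entries; but the coprime identity $[G,B,B] = [G,B]$, together with the normality of $[G,B]$ in $G$ and its $B$-invariance, closes the loop cleanly.
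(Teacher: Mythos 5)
Your proof is correct and follows essentially the same route as the paper: both reduce to a $q$-subgroup with $q\neq p$, use the coprime-action identity $[G,Q,Q]=[G,Q]$ to see that $[G,Q]$ sits inside $\gamma_i(G,A)$ (so the $p$-central hypothesis transfers to $[G,Q]$), and then invoke the classical fact that a $p'$-automorphism fixing $\Omega_1$ ($\Omega_2$ for $p=2$) of a $p$-group is trivial. The only cosmetic difference is that the paper quotes Isaacs' Lemma 4.1 at the last step where you apply Huppert's Satz IV.5.12 directly to $[G,B]$; the content is the same.
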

\begin{proof}
Let $Q$ be a $q$-Sylow of $A$, $q \neq p$.  By \cite[Theorem 3.6, p 181]{Gor}, $[G,Q,Q]=[G,Q]$, so $[G,_iQ]=[G,Q]$.  As $[G,_iQ] \leq \gamma_{i+1}(G,Q)$, it follows that $Q$ acts $p$-centrally on $[G,Q]$, and from \cite[Lemma 4.1]{Isa} it follows that $Q$ is a $p$-group.  Thus $Q=1$.  
   
\end{proof}
The collection of the lemmas above yields the following key result.
\begin{prop}\label{P8}
Let $G$ be a finite $p$-group and $A$ be a group of automorphisms of $G$,  such that  $A$ acts $p$-centrally on $\gamma_{p}(G,A)$.  Then an element $\sigma \in A$ satisfies $\sigma^{p^n}=1$ if and only if $\sigma$ acts trivially on $G/ \Omega_n(H)$, where $H=[G,A]$. 
\end{prop}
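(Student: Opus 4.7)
My plan is to establish the equivalence by induction on $n$, treating both directions in parallel. By Proposition \ref{P7}, $\sigma$ is a $p$-element. The inductive step for both directions is a quotient argument, so the whole proposition reduces to the case $n=1$.

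\emph{Forward direction.} If $\sigma^p=1$ then $\langle\sigma\rangle$ has order $\le p$ and inherits the $p$-central action on $\gamma_p(G,\langle\sigma\rangle)\le\gamma_p(G,A)$, so Lemma \ref{L2} yields $\operatorname{exp}([G,\sigma])\le p$ and hence $[G,\sigma]\le\Omega_1(H)$. For $n\ge 2$, set $\tau=\sigma^p$; then $\tau^{p^{n-1}}=1$, the inductive hypothesis gives $[G,\tau]\le\Omega_{n-1}(H)$, so the image of $\sigma$ in $\operatorname{Aut}(G/\Omega_{n-1}(H))$ has order $\le p$. Lemma \ref{L6} preserves the hypothesis on $\bar G=G/\Omega_{n-1}(H)$, so applying the $n=1$ case to $\bar G$ and identifying $\Omega_1(\bar H)=\Omega_n(H)/\Omega_{n-1}(H)$ via Corollary \ref{C5}(i) yields $[G,\sigma]\le\Omega_n(H)$.

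\emph{Reverse direction.} For $n\ge 2$, I would pass to $\bar G=G/\Omega_1(H)$: Corollary \ref{C5}(i) gives $[\bar G,\sigma]\le\Omega_{n-1}(\bar H)$, the inductive hypothesis then forces $[G,\sigma^{p^{n-1}}]\le\Omega_1(H)$, and the $n=1$ base case applied to $\sigma^{p^{n-1}}$ finishes. So the decisive step is: $[G,\sigma]\le\Omega_1(H)\Rightarrow \sigma^p=1$. Here I would follow Lemma \ref{L2}'s line of argument, using an outer induction on $|G|$ (the degenerate case $H=G$ being easy). The intermediate goal is that $K=H\langle\sigma\rangle$ is $p$-abelian; the commutator-chain calculation of Lemma \ref{L2} gives $\gamma_{i-1}(K)\le\gamma_i(G,A)$ using only the cyclicity of $\langle\sigma\rangle$, not its order. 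Provided one also shows $\gamma_3(G,A)\le\Omega_1(H)$ has exponent $\le p$ (which in turn implies $\operatorname{exp}(\gamma_p(G,A))\le p$, hence $[\gamma_p(G,A),A]=1$ by the $p$-central hypothesis, and finally $\gamma_{p+1}(G,A)=1$ via Lemma \ref{L0}(2)), one obtains $\gamma_p(K)=1$, so $K$ is regular, and $K'\le\gamma_3(G,A)$ has exponent $\le p$, so $K$ is $p$-abelian. For $x=[g,\sigma]\in\Omega_1(H)$ we then have $x^p=1$, and the $p$-abelian identity
\[
x\cdot x^\sigma\cdots x^{\sigma^{p-1}}\cdot\sigma^{-p}=(x\sigma^{-1})^p=x^p\sigma^{-p}=\sigma^{-p}
\]
forces $[g,\sigma^p]=x\cdot x^\sigma\cdots x^{\sigma^{p-1}}=1$ for every $g\in G$; faithfulness of the action gives $\sigma^p=1$. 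The case $p=2$ is covered by the separate variant at the end of Lemma \ref{L2}'s proof, which exploits that $A$ fixes elements of order $4$ instead of relying on the $p$-abelian identity.

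\emph{Main obstacle.} The real work is the base case of the reverse direction: Lemma \ref{L2} cannot be invoked as a black box, because $|\sigma|$ may exceed $p$. One therefore has to re-establish its conclusions on $K=H\langle\sigma\rangle$ from scratch via the outer induction on $|G|$, checking carefully that each ingredient—the chain $\gamma_{i-1}(K)\le\gamma_i(G,A)$, the exponent bound on $\gamma_3(G,A)$, and the vanishing $\gamma_{p+1}(G,A)=1$—depends only on the cyclicity of $\langle\sigma\rangle$ and on the $p$-central hypothesis on $\gamma_p(G,A)$, and is insensitive to the order of $\sigma$.
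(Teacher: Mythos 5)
Your overall inductive architecture (reduce both directions to the case $n=1$ via Lemma \ref{L6} and the identification $\Omega_1(H/\Omega_i(H))=\Omega_{i+1}(H)/\Omega_i(H)$ from Corollary \ref{C5}(i)) matches the paper's, and your forward direction is correct. The gap sits exactly where you place the ``main obstacle'': the base case of the reverse direction, and your proposed resolution does not work. You take $K=H\langle\sigma\rangle$ with $H=[G,A]$ and aim to show $\gamma_3(G,A)\le\Omega_1(H)$, hence $\gamma_{p+1}(G,A)=1$ and $\gamma_p(K)=1$. But these are assertions about the whole of $A$, whereas the hypothesis of the reverse direction constrains only the single element $\sigma$ (namely $[G,\sigma]\le\Omega_1(H)$); the remaining elements of $A$ may act with large commutators. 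For instance, with $A=\operatorname{Inn}(G)$ your intermediate claims become $\gamma_3(G)\le\Omega_1(G')$ and $\gamma_{p+1}(G)=1$, which certainly do not follow from the standing hypotheses. No outer induction on $|G|$ can rescue this: in Lemma \ref{L2} that induction is powered by the assumption $|A|=p$ (so that $[G,A,A]$ is handled by the inductive hypothesis on the smaller group $[G,A]$), and you no longer have that assumption.

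The repair --- and what the paper actually does --- is to shrink everything to $\langle\sigma\rangle$: set $K=[G,\langle\sigma\rangle]\langle\sigma\rangle$. Then $[G,\langle\sigma\rangle]\le\Omega_1(H)$ is immediate from the hypothesis, since it is generated by the elements $[g,\sigma]$, all of which lie in the normal subgroup $\Omega_1(H)$, of exponent $\le p$ by Corollary \ref{C5}(i); hence $\gamma_p(G,\langle\sigma\rangle)$ has exponent $\le p$ and, being contained in $\gamma_p(G,A)$, is centralized by $\sigma$ by the $p$-central hypothesis, so $\gamma_{p+1}(G,\langle\sigma\rangle)=1$ by Lemma \ref{L0}(2). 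Your commutator-chain calculation, run with $\langle\sigma\rangle$ in place of $A$, now gives $\gamma_p(K)\le\gamma_{p+1}(G,\langle\sigma\rangle)=1$ and $K'\le[G,\langle\sigma\rangle]$ of exponent $\le p$, so $K$ is regular and $p$-abelian and your closing identity goes through verbatim (for $p=2$, class $\le 1$ already makes $K$ abelian). Note that, once this correction is made, no induction on $|G|$ is needed in the reverse direction at all.
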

\begin{proof}
For $n=1$, let be $\sigma \in A$ such that  $\sigma^p=1$. It follows immediately from Lemma \ref{L2} that  $[G,\sigma]$ has exponent $ \leq p$, that is $\sigma$ acts trivially on $G/ \Omega_1(H)$.   

Conversely, assume that  $ [g,\sigma] \in \Omega_1(H)$, for all $g \in G$.  Let be $K = [G,\langle \sigma \rangle] \langle \sigma \rangle$; as we seen in the proof of  Lemma \ref{L2} $\gamma_{p}(K) \leq \gamma_{p+1}(G,\langle \sigma \rangle)$, and as $\gamma_{p}(G,\langle \sigma \rangle)$ has exponent $p$, it follows that $\gamma_{p+1}(G,\langle \sigma \rangle)=1$.  Therefore $K$ has class $\leq p-1$, so it  is regular, and since $K' \leq [G,\langle \sigma \rangle]$ has exponent $p$, $K$ is $p$-abelian.  Now let be $g \in G$ and $x=[g, \sigma]$.  We have  
$$g^{-1}g^{\sigma^p}=x^{1+\sigma+...+\sigma^{p-1}}=(x\sigma^{-1})^p \sigma^p = x^p \sigma^{-p} \sigma^p=1$$
thus $\sigma$ has order $\leq p$.

Now we proceed by induction on $n$.  If $\sigma^{p^{n+1}}=1$, then $\sigma^p$ has order at most $p^n$, so by induction $g^{-1}\sigma^p(g)  \in \Omega_n(H)$ for all $g \in G$.  Therefore $\sigma$ acts on $G/ \Omega_n(H)$ as an automorphism of order $\leq p$. By Lemma \ref{L6}, $\sigma$ acts $p$-centrally on $\gamma_p(G/ \Omega_n(H), A)$,  therefore $[G/ \Omega_n(H), \langle \sigma \rangle]$ has exponent $\leq p$ by Lemma \ref{L2}.  Thus we have $[g, \sigma]^p \in \Omega_n(H)$ for all $g \in G$, and by Corollary \ref{C5} (i), $[g, \sigma]^{p^{n+1}}=1$ for all $g \in G$.     

Similarly, assume that  $[G,\sigma] \leq \Omega_{n+1}(H)$.  By Corollary \ref{C5} (i), $\operatorname{exp}(\Omega_{n+1}(H)) \leq p^{n+1}$, hence we have $[g,\sigma]^{p^{n+1}} =1$, for all $g \in G$.  Therefore the automorphism $\sigma'$ induced by $\sigma$ on $G/ \Omega_n(H)$ satisfies  $[g\Omega_n(H), \sigma']^p = 1$.  We deduce from the first step that $\sigma'$ has order $ \leq p$.  Thus $g^{-1} \sigma^p(g) \in \Omega_n(H)$ for all $g \in G$.  By induction the order of $\sigma^p$ is at most $p^n$, the result follows.

\end{proof}

Now we can prove our first main theorem.

\begin{proof}[Proof of Theorem \ref{main1}]
(i) this is Corollary \ref{C5} (i).

(ii) Proposition \ref{P8} implies that $\Omega_{\{i\}}(A) = \operatorname{C}_A(G/\Omega_i(H))$, thus $\Omega_{\{i\}}(A)$ is a subgroup of $A$.\\
(iii) Let be $\operatorname{exp}(H)=p^n$ and $\operatorname{exp}(A)=p^m$.  We have $\Omega_n(A)=\operatorname{C}_A(G/\Omega_n(H))=\operatorname{C}_A(G/H)=A$,  it follows from (ii) that $p^m \leq p^n$ .  Again we have $A=\Omega_m(A)=\operatorname{C}_A(G/\Omega_m(H))$.  Therefore $[G,A]=H=\Omega_m(H)$.  By Corollary \ref{C5} (i), $p^n \leq p^m$. \\
(iv) By Lemma \ref{L1}, $H$ acts $p$-centrally on $\gamma_{p-1}(H)$. Lemma \ref{L'6} yields $[\Omega_{i}(\gamma_{p-1}(H)),H] \leq \Omega_{i-1}(\gamma_{p-1}(H))$, for all $i \geq 1$.  Therefore 
$$1 \leq \Omega_{1}(\gamma_{p-1}(H)) \leq \Omega_{2}(\gamma_{p-1}(H)) \leq ...\leq \Omega_{n}(\gamma_{p-1}(H))=\gamma_{p-1}(H) \leq \gamma_{p-2}(H)\leq ...\leq H$$
is a central series of $H$.  This proves that $H$ is nilpotent of class $\leq  n+p-2$.

Similarly, let be $L=\gamma_p(G,A)$;  Lemma \ref{L'6} yields $[A,\Omega_{i}(L)] \leq \Omega_{i-1}(L)$, for all positive integer $i$.  Therefore $A$ stabilizes the normal series
$$1 \leq \Omega_{1}(L) \leq \Omega_{2}(L) \leq ...\leq \Omega_{n}(L)= \gamma_p(G,A) \leq \gamma_{p-1}(G,A)\leq ...\leq \gamma_2(G,A) \leq G$$             
It follows at once from the well known result of Kaloujnine (see \cite[Satz III.2.9]{Hup}) that $A$ is nilpotent of class $\leq  n+p-2$.
\end{proof}

\section{\bf {\bf \em{\bf $p$-central action, $p$-nilpotency and $p$-solubility length}}}
\vskip 0.4 true cm

In the following proofs we need only Proposition \ref{P7}, which is by the way independent from the material developed in the previous section.

 \begin{proof}[Proof of Theorem \ref{main2}]
  Let $P$ be a non-trival $p$-subgroup of $G$.  We have $A=\operatorname{N}_G(P)/\operatorname{C}_G(P)$ acts faithfully on $P$, and $p$-centrally on $\gamma_i(P,A)$.  By Proposition \ref{P7}, $A$ is a $p$-groups.  The result follows now from Frobenius' criterion of $p$-nilpotency (see \cite[Theorem 4.5, p 253]{Gor}).        
   
 \end{proof}

\begin{proof} [Elementary proof of Theorem \ref{GW}]
Let be $C= \operatorname{C_G(\Omega(G))}$.  As $\Omega(C) \leq \Omega(G)$, we have $\Omega(C) \leq \operatorname{Z}(C)$.  Therefore $C$ acts $p$-centrally on itself.  It follows from Theorem \ref{main2} that $C$ has a normal $p$-complement $N$, and since $N$ is characteristic in $C$, $N$ is also normal in $G$.  Now we have only to show that $N$ has a $p$-power index, indeed, As $\Omega(G) \leq \operatorname{Z}_k(G)$, it follows that $\Omega(G)$ is nilpotent, so it is a $p$-group.  On the other hand it follows by induction that $[\Omega(G),_iG] \leq \operatorname{Z}_{k-i}(G)$, for $i \leq k$.  In particular $[\Omega(G),_kG]=1$.  Therefore $A=G/C$ acts faithfully on $\Omega(G)$ and $p$-centrally on $\gamma_{k+1}(G,A)$.  From Proposition \ref{P7}, one deduces that $|G:C|$ is a $p$-power, and clearly $|C:N|$ is a $p$-power.     

\end{proof}

The remainder part is devoted to proving Theorem \ref{main4}.  The proof is inspired by that of Khukhro in \cite{Khu}. Let us quote some celebrated results that we need in the sequel.

Let $P$ be a finite $p$-group.  Recall that a Thompson critical subgroup of $P$ is a characteristic subgroup $C$ having the following properties:
\begin{enumerate}
\item $[C,P] \leq \operatorname{Z}(C)$;
\item $C/\operatorname{Z}(C)$ is elementary abelian;
\item $C$ is self centralizing, that is $\operatorname{C}_G(C)=\operatorname{Z}(C)$;
\item any non trivial $p'$-automorphism of $P$ acts non trivially on $C$.
\end{enumerate}

A celebrated result of Thompson asserts that every finite $p$-group $P$ has a Thompson critical subgroup $C$ (see\cite[Theorem 5.3.11]{Gor}). By Proposition \ref{P7}, any non trivial $p'$-automorphism of $P$ acts non trivially on $K=\Omega(C)$, and  the first two properties of $C$ imply that $K$ has exponent $p$ ($\leq 4$ if $p=2$) . Hence we have the following well-known result.

\begin{thm}[Thompson] \label{Tho}
Let $P$ be a finite $p$-group.  Then $P$ has a subgroup $K$ of exponent $p$ ($4$ if $p=2$) and class at most $2$, such that any non trivial $p'$-automorphism of $P$ acts non trivially on $K$.

\end{thm}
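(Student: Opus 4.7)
The plan is to start from a \emph{Thompson critical subgroup} $C$ of $P$, whose existence is guaranteed by the classical result cited as \cite[Theorem 5.3.11]{Gor}, and show that $K := \Omega(C)$ serves as the desired subgroup. The three conclusions---nilpotency class, exponent, and detection of $p'$-automorphisms---each follow from a separate property of $C$.

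The class bound is immediate: property (1) gives $[C,C] \leq [C,P] \leq \operatorname{Z}(C)$, so $C$ itself is nilpotent of class at most $2$, and $[K,K] \leq \operatorname{Z}(C) \cap K \leq \operatorname{Z}(K)$ inherits this to $K$. The exponent bound is a direct computation inside the class-$2$ group $C$, using the bilinearity of the commutator modulo $\operatorname{Z}(C)$ together with the identity $(xy)^n = x^n y^n [y,x]^{\binom{n}{2}}$ valid in class $2$. For $p$ odd and $x, y \in \Omega_1(C)$, bilinearity gives $[x,y]^p = [x^p, y] = 1$, and since $p \mid \binom{p}{2}$ we obtain $(xy)^p = 1$. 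For $p = 2$, property (2) forces $x^2 \in \operatorname{Z}(C)$ for every $x \in C$, so $[x,y]^2 = [x^2, y] = 1$ whenever $x \in C$; then for $x, y \in \Omega_2(C)$ one computes $(xy)^4 = [y,x]^6 = 1$. In either case $K$ is closed under multiplication and has exponent at most $p$ (respectively $4$).

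The remaining point is that every nontrivial $p'$-automorphism $\alpha$ of $P$ acts nontrivially on the smaller subgroup $K$, not merely on $C$. Suppose otherwise: $\alpha$ fixes $K$ pointwise. Let $\bar A$ be the image of $\langle \alpha \rangle$ inside $\operatorname{Aut}(C)$, a $p'$-group acting faithfully on the finite $p$-group $C$ and fixing every element of $\Omega(C) = K$. In the language of the paper this says exactly that $\bar A$ acts $p$-centrally on $\gamma_1(C, \bar A) = C$, so Proposition \ref{P7} forces $\bar A$ to be a $p$-group, hence trivial. Therefore $\alpha$ acts trivially on $C$, contradicting property (4) of a critical subgroup. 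The main technical step is the exponent computation, especially the $p = 2$ case where property (2) is needed to pull $x^2$ into $\operatorname{Z}(C)$ before the class-$2$ commutator identities can trivialize $[x,y]^2$; the rest is a routine assembly of the properties of $C$ with Proposition \ref{P7}.
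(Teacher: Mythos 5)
Your proposal is correct and follows essentially the same route as the paper: take a Thompson critical subgroup $C$, set $K=\Omega(C)$, derive the class and exponent bounds from properties (1) and (2), and use Proposition \ref{P7} to upgrade ``acts nontrivially on $C$'' to ``acts nontrivially on $K$''. The only difference is that you spell out the class-$2$ exponent computation that the paper leaves implicit.
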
 

We need also the following weaker form of the Hall-Higman theorem (see \cite[Theorem B]{Hall}).
\begin{thm}[Hall-Higman] \label{HH}
Let $V$ be a vector space over a field of characteristic $p$, and $G$ be a $p$- soluble group of automorphisms of $V$ such that $\operatorname{O}_p(G)=1$.  If $g$ is an element of $G$ of order $p^m$, then the minimal polynomial of $g$ is $(X-1)^r$, where $r$ satisfies $p^m-p^{m-1}\leq r \leq p^m$.
\end{thm}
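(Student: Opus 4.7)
The plan is to split the result into the upper bound $r \le p^m$, which is essentially automatic, and the lower bound $r \ge p^m - p^{m-1}$, which is the entire substance of Hall and Higman's Theorem B. For the upper bound, note that since $g$ has $p$-power order, every eigenvalue of $g$ in the algebraic closure of the ground field is a $p^m$-th root of unity, hence equals $1$ in characteristic $p$; so $g - I$ is nilpotent and the minimal polynomial is a power of $X - 1$. The Frobenius identity $(g - I)^{p^m} = g^{p^m} - I = 0$, valid in characteristic $p$, then forces this power to be at most $p^m$.

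For the lower bound I would argue by induction on $|G| + \dim V$ and pick a minimal counterexample, then apply a chain of standard reductions. First, $V$ may be assumed irreducible: if $U \le V$ is a proper $G$-submodule, then the minimal polynomial of $g$ divides $(X-1)^r$ on both $U$ and $V/U$, so a counterexample survives in one of them. In characteristic $p$ a nontrivial $p$-group has no faithful irreducible module, so once $V$ is irreducible and faithful the hypothesis $O_p(G) = 1$ becomes automatic. Next, replacing $G$ by $\langle g \rangle O_{p'}(G)$ preserves $p$-solubility, triviality of $O_p$, and the minimal polynomial of $g$. Using Clifford theory together with the standard structure of $p$-soluble groups with $O_p(G) = 1$ (in which $O_{p'}(G)$ essentially contains its own centralizer), one reduces to $G = \langle g \rangle Q$ for a single $q$-group $Q$ with $q \ne p$ on which $g$ acts faithfully, and an irreducible faithful $G$-module $V$. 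A Thompson-critical-subgroup argument in the spirit of Theorem \ref{Tho} further permits one to assume $Q$ is a special $q$-group with $Z(Q)$ cyclic acting on $V$ by a faithful central character.

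In this reduced setup $V|_Q$ is a sum of copies of the unique faithful irreducible $Q$-representation, of dimension $q^n$ with $|Q : Z(Q)| = q^{2n}$, and $g$ permutes the weight spaces for $Z(Q)$ (equivalently, the nontrivial linear characters of $Q/Z(Q)$) according to its action on the $\mathbb{F}_q$-vector space $Q/Z(Q)$. The Jordan form of $g$ on $V$ can be read off from the cycle structure of this permutation combined with the extension data, and a direct cycle-length count gives $r = p^m$ in the generic case. The hard part, and the technical core of the original Hall--Higman paper, is the exceptional situation in which $q$ divides $p^j - 1$ for some $j \le m$ in a special way that shortens certain Jordan blocks; here one must show by a careful Brauer character computation on the extraspecial $q$-group that even after the shortening the minimal polynomial of $g$ retains degree at least $p^m - p^{m-1}$. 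Once this exceptional case is dispatched the induction closes, and both bounds in the theorem are established.
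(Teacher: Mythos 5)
The paper does not prove this statement at all: Theorem \ref{HH} is imported verbatim as a weak form of Theorem B of Hall and Higman \cite{Hall}, so there is no in-paper argument to compare yours against; your proposal has to stand on its own as a proof of a deep classical theorem. The upper bound part of your argument is complete and correct: in characteristic $p$ the identity $(g-I)^{p^m}=g^{p^m}-I=0$ immediately gives that the minimal polynomial is a power of $X-1$ of degree at most $p^m$.

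The lower bound, however, is an outline of Hall and Higman's strategy rather than a proof, and the gap sits exactly where the theorem lives. The preliminary reductions you list are the standard ones, though the passage to an irreducible constituent needs more care than you give it: the order of $g$ can drop on both $U$ and $V/U$ while remaining $p^m$ on $V$, so one must argue from the divisibility of minimal polynomials which constituent still carries a counterexample to the correct inequality for the correct power of $p$. More seriously, after reaching the extraspecial-$q$-group configuration you write that ``one must show by a careful Brauer character computation \dots that even after the shortening the minimal polynomial of $g$ retains degree at least $p^m-p^{m-1}$.'' That sentence is a restatement of the conclusion in the only case where it is not obvious; it is precisely the content of Theorem B, and the eigenvalue count on the module induced from a $Z(Q)$-weight space under the permutation action of $g$ on $Q/Z(Q)$ (together with the arithmetic condition $q \mid p^m-1$ that the exceptional case forces) occupies the bulk of Hall and Higman's original analysis. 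Since that step is asserted rather than carried out, the proposal does not constitute a proof. Given that the paper itself treats this result as a black-box citation, the appropriate course is to cite \cite{Hall} rather than to reprove it.
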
 

\begin{proof}[Proof of Theorem \ref{main4}]
Let $p^n$ be the exponent of a $p$-Sylow of $G/\operatorname{O}_p(G)=\overline{G}$, and $Q$ be a $p$-complement in $\operatorname{O}_{pp'}(G)$.\\

{\bfseries Claim 1.} There is $g \in G$ which normalizes $Q$ such that $\overline{g}=g\operatorname{O}_p(G)$ has order $p^n$.

Indeed, Let be $y \in G$ such that $\overline{y}$ has order $p^n$.  As any two $p$-complements in $\operatorname{O}_{pp'}(G)$ are conjugate, $Q$ and $Q^y$ are conjugate in $\operatorname{O}_{pp'}(G)$.  Moreover since $\operatorname{O}_{pp'}(G)= \operatorname{O}_{p}(G)Q$, there is $x \in \operatorname{O}_{p}(G)$ such that $Q^y=Q^x$.  Now it suffices to take $g=yx^{-1}$.\\         

{\bfseries Claim 2.} $\overline{g}$ acts faithfully on $\overline{Q}=Q\operatorname{O}_{p}(G)/\operatorname{O}_{p}(G)$, so that $[Q,{\overline{g}}^{p^{n-1}}] \neq 1$.

Indeed, as $\overline{Q}=\operatorname{O}_{p'}(\overline{G})$, it follows from \cite[Theorem 6.3.2]{Gor} that $\overline{Q}$ is self centralizing.  Thus $\langle \overline{g} \rangle \cap \overline{Q} = \overline{1}$.\\

Now let  $K$ be a subgroup of $\operatorname{O}_{p}(G)$ as in Theorem \ref{Tho}.   Consider a series of $\operatorname{O}_{p}(G)$-invariant subgroups 
$$ 1 \leq K_1 \leq ...\leq K_n=K$$
with elementary abelian sections, on which $\operatorname{O}_{p}(G)$ acts trivially.  Whence there is a well defined action (induced by conjugation) of the semi-direct product   $A=Q\langle \overline{g} \rangle$ on each section $K_{i+1}/K_i$.  Since $\operatorname{O}_{p'}(G)=1$, $\operatorname{O}_{p}(G)$ is self centralizing by \cite[Theorem 6.3.2]{Gor}; therefore    $[Q,g^{p^{n-1}}]$ is a non trivial $p'$-group of automorphisms of $\operatorname{O}_{p}(G)$, and it acts non trivially on $K$ by Theorem \ref{Tho}. Therefore $[Q,g^{p^{n-1}}]$ acts non trivially on some section $V=K_{i+1}/K_i$.\\    

{\bfseries Claim 3.} $\operatorname{O}_{p}(A)=1$, and $\overline{g}$ acts faithfully on $V$.\\
   
Assume first that Claim 3 is true.  It follows from Theorem \ref{HH} that $(\overline{g}-\operatorname{1}_V)^s \neq 0$, where $s=p^n-p^{n-1}-1$.  Thus for some $x \in K$, $[x,_sg] \neq 1$.  On the other hand any $p$-Sylow $P$ of $G$ acts $p$-centrally on $\gamma_k(P)$, and since $K$ has exponent $p$ ($4$ if $p=2$) we have $[x,_kg]=1$.  This implies that $k \geq s+1 = p^n-p^{n-1}$.

It remains to prove Claim 3.  As $\langle \overline{g} \rangle$ is a $p$-Sylow in $A$, if $\operatorname{O}_{p}(A) \neq 1$, then $\overline{g}^{p^{n-1}} \in \operatorname{O}_{p}(A)$.  Hence $1 \neq [Q,\overline{g}^{p^{n-1}}] \leq \operatorname{O}_{p}(A) \cap \operatorname{O}_{p'}(A)=1$,  a contradiction.  Also by definition of $V$, $[Q,\overline{g}^{p^{n-1}}]$ acts non trivially on it, so $\overline{g}^{p^{n-1}}$ acts non trivially on $V$.  Thus $\overline{g}$ acts faithfully on $V$.

\end{proof}

\begin{center}{\textbf{Acknowledgments}}

A. Mann taked my attention to $p$-central $p$-groups, and provided me with a copy of \cite{Gon}. An early discussion with M. I. Isaacs about his results in \cite{Isa} was very helpful,  and T. Laffey provided me with some of his very old papers. I'm really grateful to all of them. 
\end{center}

\vskip 0.4 true cm




\bibliographystyle{model1a-num-names}
\bibliography{<your-bib-database>}

\begin{thebibliography}{12}

\bibitem{Berk1}
Y. Berkovich, Groups of prime power order, vol. 1, Walter de Gruyter, 2008. 
\bibitem{Berk2}
Y. Berkovich and Z. Janko, Groups of prime power order, vol. 2, Walter de Gruyter, 2008.
\bibitem{Berk3}
Y. Berkovich and Z. Janko, Groups of prime power order, vol. 3, Walter de Gruyter, 2011.
\bibitem{Bry}
E. G. Bryukhanova, The $2$-length and $2$-period of a finite solvable group, {\em Algebra Logika} \textbf{18} (1979), 9-31; English
transl., {\em Algebra Logic} \textbf{18} (1979), 5-20


\bibitem{Gon}
 J. Gonz\'alez-S\'anchez and T. S. Weigel, Finite $p$-central groups of height $k$, {\em Isr. J. Math.} \textbf{181} (2011), 125-143.


\bibitem{Gor}
D. Gorenstein, Finite groups, Chelsea, New York, 1980. 
\bibitem{Isa}
 I.M. Isaacs, Automorphisms fixing elements of prime order in finite groups, {\em  Arch. Math.}
\textbf{68} (1997), 359-366.
\bibitem{Hall}
P. Hall and G. Higman, The $p$-length of a $p$-soluble group and reduction theorems for Burnside's problem, {\em Proc
London Math. Soc.} (3) \textbf{6} (1956), 1-42.

\bibitem{Hup}
B. Huppert, Endliche Gruppen. I. {\em Die Grundlehren der Mathematischen Wissenschaften},Band 134. Springer-Verlag, Berlin, 1967.

\bibitem{Khu}
E. Khukhro, On $p$-soluble groups with a generalized $p$-central or a powerful Sylow $p$-subgroup, {\em International Journal of Group Theory}\textbf{1} (2012), No. 2, 51-57 


\bibitem{Laf2}
T. J. Laffey, Centralizers  of  Elementary  Abelian  Subgroups  in  Finite  $p$-Groups, {\em Journal  of Algebra} \textbf{51},  88-96  (1978)

\bibitem{Xu1}
M.Y. Xu, A class of semi-p-abelian p-groups (in Chinese), {\em Kexue
Tongbao} \textbf{26} (1981), 453-456. English translation in {\em Kexue
Tongbao} (English Ed.) \textbf{27} (1982), 142-146

\bibitem{Xu2}
M.Y. Xu, The power structure of finite $p$-groups, {\em Bull. Aust. Math. Soc.} \textbf{36} (1987), no. 1, 1-10.

\end{thebibliography}



\end{document}